\documentclass[11pt,twoside,reqno,centertags,english]{amsart}
\usepackage[utf8]{inputenc}
\usepackage{babel}
\usepackage{lmodern}
\usepackage{bbding}
\usepackage{hyperref}
\usepackage{geometry}
\usepackage{comment}
\hypersetup{
	colorlinks,
	citecolor=blue,
	filecolor=blue,
	linkcolor=blue,
	urlcolor=blue
}
\def\be{\begin{equation}}
	\def\ee{\end{equation}}
\newcommand {\p} {\partial}
\usepackage{amsmath, amsfonts, amssymb}
\usepackage{amsthm}
\newtheorem{theorem}{Theorem}[section]
\newtheorem{lemma}[theorem]{Lemma}

\newtheorem{corollary}[theorem]{Corollary}
\newtheorem{proposition}[theorem]{Proposition}
\theoremstyle{remark}
\newtheorem{remark}[theorem]{Remark}

\theoremstyle{definition}
\newtheorem{definition}[theorem]{Definition}
\newtheorem{openproblem}[theorem]{Open Problem}
\newtheorem{assumption}[theorem]{Assumption}
\newtheorem{conv}[theorem]{Convention}
\newtheorem{setting}[theorem]{Setting}
\usepackage{mathtools}

\makeatletter
\DeclareRobustCommand\widecheck[1]{{\mathpalette\@widecheck{#1}}}
\def\@widecheck#1#2{%
	\setbox\z@\hbox{\m@th$#1#2$}%
	\setbox\tw@\hbox{\m@th$#1%
		\widehat{%
			\vrule\@width\z@\@height\ht\z@
			\vrule\@height\z@\@width\wd\z@}$}%
	\dp\tw@-\ht\z@
	\@tempdima\ht\z@ \advance\@tempdima2\ht\tw@ \divide\@tempdima\thr@@
	\setbox\tw@\hbox{%
		\raise\@tempdima\hbox{\scalebox{1}[-1]{\lower\@tempdima\box
				\tw@}}}%
	{\ooalign{\box\tw@ \cr \box\z@}}}
\makeatother

\DeclareMathOperator{\tr}{Tr}

\DeclareMathOperator{\di}{div}
\newcommand{\vertiii}[1]{{\left\vert\kern-0.25ex\left\vert\kern-0.25ex\left\vert #1 
		\right\vert\kern-0.25ex\right\vert\kern-0.25ex\right\vert}}
\newcommand{\bigk}{\mathcal K(E,B)}
\newcommand{\bigkp}{\mathcal K(E_+,B_+)}

\newcommand {\R} {\mathbb{R}}
\newcommand {\ms} {\mathcal M}
\newcommand {\lc} {\mathcal A}

\newcommand{\la}{\left\langle}
\newcommand{\ra}{\right\rangle}

\newcommand{\nn}{N\times N}

\numberwithin{equation}{section}
\newcommand{\Wei}{\mathfrak{H}}

\newcommand{\wei}{\mathfrak{h}}

\newcommand{\mut}{\mathcal{L}}
\newcommand{\mom}{v^{\#}}
\newcommand{\gamex}{\nu}
\newcommand{\kp}{k}
\newcommand{\pp}{\pi}
\newcommand{\zp}{\xi}

\newcommand{\ooo}{\mathcal O}

\date{}

\newcommand{\ce}{\mathbf{K}}
\newcommand{\et}{\lambda}
\newcommand{\emm}{w}

\newcommand{\dom}{\operatorname{dom}}
\newcommand{\cf}{\mathbf{F}}

\newcommand{\g}{G}

\newcommand{\Mmm}{\R^{\nn}_+}
\newcommand{\Mm}{\R^{\nn}_+}
\title[Duality]{A unified duality framework for barotropic,  quantum and Korteweg fluids}
\author[D.~Vorotnikov]{Dmitry Vorotnikov}
\address[D.~Vorotnikov]{University of Coimbra, CMUC, Department of Mathematics,  3001-501 Coimbra, Portugal}{}
\email{mitvorot@mat.uc.pt}

\begin{document}

\begin{abstract} We investigate a dual variational formulation, in the spirit of Brenier, for several compressible fluid models: the  compressible barotropic Euler system, the quantum Euler system, and the Euler–Korteweg system. We identify a unified abstract framework encompassing all three systems, which enables a simultaneous analysis. By introducing time-adaptive weights, we establish the consistency of the duality scheme on large time intervals. We prove the existence of variational dual solutions to the corresponding Cauchy problems for continuous, vacuum-free initial data in spaces of finite Radon measures, and establish the absence of a duality gap. As an application, we formulate and prove a “Dafermos principle” for these models: no subsolution can dissipate the total entropy earlier or at a faster rate than the corresponding strong solution on its interval of existence. We also discuss connections between our abstract consistency result and Brenier’s shock-free substitutes for entropy solutions of Burgers’ equation.
\end{abstract}
\maketitle

Keywords: compressible Euler, variational dual formulation, entropy,  Dafermos principle

\textbf{MSC [2020] 35Q35, 37K58, 49Q99, 76N10}

\section{Introduction} 
Cauchy problems for nonlinear evolutionary PDEs often admit infinitely many weak solutions \cite{DP79,lel09,lel10,BNF16,DS17}. In the context of compressible fluid dynamics, such nonuniqueness was demonstrated in \cite{C14,CKL15,Fei1,DF15,CVC21,Fei25,KK20}, to name just a few. Many of these systems possess a formally conserved quantity, hereafter referred to as \emph{total entropy}, which may fail to be preserved for weak solutions unless some Onsager-type conditions are assumed. Although no universal criterion exists for selecting physically relevant solutions, various variational principles from natural and informational sciences can help to identify them. Prominent examples include Prigogine’s minimum entropy production principle for open systems \cite{Prigozhin} and Ziegler’s maximum entropy production principle for closed systems \cite{Ziegler},   together with numerous related principles \cite{MS06}. These principles have been employed both to derive PDE models and to select physically meaningful weak solutions \cite{glimm}.
Dafermos proposed to assume that physically relevant weak solutions dissipate total entropy more rapidly than irrelevant ones \cite{Daf1,Daf2}. While smooth solutions conserve the total entropy, weak solutions may dissipate it through shocks and other mechanisms. The criterion is local in time: at a bifurcation moment, the total entropies of ``good'' and ``bad'' solutions may coincide, after which the ``bad'' solution’s entropy temporarily exceeds that of the ``good'' solution, though this relationship may reverse at later times \cite{Hsiao,Daf2,Fei1,CJ16}. Although the criterion is known not to be fully compatible with the compressible barotropic Euler system \cite{CK14}, recent work has extended it to measure-valued solutions of this system \cite{FJ25}, and to subsolutions of some other models arising in fluid dynamics \cite{GK,CA23}; related numerical investigations have been reported in \cite{Klei22,Klei23,Klei24}. 
For systems of hyperbolic conservation laws, classical selection criteria (Lax’s shock condition, the Kruzhkov–Lax entropy criterion, and the vanishing viscosity criterion) are, under suitable assumptions, compatible with Dafermos principle \cite{Daf1,KS97}. In the context of Lagrangian and continuum mechanics, including the barotropic compressible Euler equations,  admissibility criteria based on minimization of Lagrangian action have recently been proposed or  examined in \cite{gi2024,mf25,gi2025,GHK}.

In this paper, we study the variational dual formulation of several compressible fluid models. The approach originates with Brenier \cite{CMP18, Br20}, who proposed, for the incompressible Euler system, the compressible barotropic Euler system, the Burgers equation, and related models, to consider solutions minimizing the time-integrated entropy. While this minimization may not always admit a solution, it naturally leads to a dual problem with more favorable convexity properties. His results imply the possibility of recovering smooth solutions of the compressible barotropic Euler system over small time intervals from the dual formulation, see Remark \ref{brenres} below. This method was applied to the multi-stream Euler–Poisson system in \cite{BMO22}, and a numerical implementation for the quadratic porous medium and Burgers equations was recently proposed in \cite{mirebeau_stampfli}. Further developments of Brenier’s approach, presented in \cite{V22, V25}, reveal structures in many nonlinear PDEs that allow variational dual formulations with advantageous properties. These dual problems are closely connected to optimal transport, specifically its \emph{ballistic} variant \cite{V22,V25, mirebeau_stampfli}.
A similar variational dual formulation has been proposed in \cite{Ach4,Ach6,Ach5} and related works. A central feature of the latter framework is the use of a \emph{base state}, which serves as an “initial guess” for the PDE solution; the entropy is then replaced by an appropriate relative entropy. In this view, the frameworks of \cite{CMP18, Br20, V22, V25, mirebeau_stampfli} use zero base states.  The recent preprints \cite{Ach7,AV25} build upon both approaches; among other contributions, \cite{AV25} introduces a duality-based gradient flow scheme for approximating solutions of PDEs for which the existence of global weak solutions may not yet have been established.
In \cite{V25}, we extended Brenier’s duality formalism to systems of PDEs that can be written in the abstract form \be \label{o:aeulerint}\p_t v= L(\cf(v)),\ee where the matrix-valued function $\cf$ satisfies suitable convexity and positivity conditions, and $L$ is a vector-valued differential operator of arbitrary order. We further assumed that \eqref{o:aeulerint} admits a strictly convex anisotropic \emph{entropy} functional $\ce(v)$ whose integral (i.e., the total entropy) is formally conserved along the flow. Moreover, the entropy $\ce$ was required to generate an anisotropic Orlicz space. Consequently, the analysis relied heavily on the theory of anisotropic Orlicz spaces (see, e.g., \cite{Gwiazda}). In particular, we established the existence of solutions to the dual problem belonging to an anisotropic Orlicz space. It was further shown that the NLS, NLKG, and GKdV equations with generic defocusing power-law nonlinearities fall within this framework.

However, the entropies arising in the context of compressible fluids typically take the form $$\ce(q,\rho)=\frac{|q|^2}{2\rho}+\dots,\quad \rho>0,$$ where $q$ represents the mass flux and $\rho$ is the density, cf. Section \ref{Sec3}, and therefore fail to generate an Orlicz space. The main motivation of the present work is to extend the formalism of \cite{V22, V25} to convex entropies of this type, thereby incorporating various compressible fluid models into the theory, well beyond Brenier’s original contributions.

The first model encompassed by our framework is the classical barotropic Euler system \eqref{cbf1}--\eqref{cbf3}, already mentioned above. To the best of our knowledge, the question of existence of global weak solutions for arbitrary initial data to this model remains open, cf. \cite{GW9,FJ25}, although for a dense set of initial data there are many ``wild'' weak solutions, cf. \cite{Ch2024,Fei25}. 

The second model under consideration is the quantum Euler system \eqref{nls1}--\eqref{nls3}, which is a classical model in quantum hydrodynamics. In this setting, one may impose the ansatz $v=\frac q \rho=\nabla \theta$, which characterizes irrotational flows. An important property of this system is that the above condition is formally preserved under the evolution. Moreover, in the irrotational case, the model is formally equivalent to the NLS equation via the M\"adelung transform \cite{AM18,Kh19,RenM}. Exploiting this correspondence, global existence of weak irrotational solutions was established in \cite{AM09,M2012}. However, even without the irrotationality assumption, one can prove the existence of weak solutions, including “wild” and non-unique ones, cf. \cite{DF15,BGL19,BT23}. In the present work, we aim to remain as general as possible and therefore do not assume that the initial data are irrotational.

A natural generalization of the quantum Euler system, which also fits within our framework, is the Euler–Korteweg system \eqref{kls1}–\eqref{kls3}. It describes capillary fluids and shares many of the same features as the quantum Euler system, cf. \cite{Saut}. It is known \cite{DF15} that, for any sufficiently smooth initial datum (allowing a vacuum set of Lebesgue measure zero), the Euler–Korteweg system admits infinitely many global weak solutions. An Onsager-type regularity condition ensuring that weak solutions of the Euler–Korteweg system conserve total entropy is proposed in \cite{Gwia17}.

In Section \ref{setti}, we introduce a general abstract framework that unifies all three systems, as well as numerous other models from \cite{CMP18,V22,V25}, enabling their simultaneous analysis. In Section \ref{duality}, by incorporating time-dependent weights, we establish the consistency of the duality scheme over large time intervals (Theorem \ref{t:smooth} and Remark \ref{r:timeei}). As an application, we formulate and prove a version of the “Dafermos principle” (Theorem \ref{dpr1}) for these models: no subsolution can dissipate the total entropy sooner or at a faster rate than a suitably defined \emph{strong solution} within the lifespan of the latter. In Section \ref{s:exi}, we demonstrate the solvability of the corresponding dual problems, for continuous vacuum-free initial data, in spaces of finite Radon measures, and show that no duality gap arises (Theorem \ref{t:exweak}). Section \ref{Sec3} provides a detailed explanation of how the compressible fluid systems mentioned above fit into our framework. In Section \ref{Burg}, we revisit Brenier’s treatment \cite{CMP18} of the duality associated with the inviscid Burgers equation and clarify certain points that may have remained ambiguous in his original work.  While Appendix \ref{apb} is purely technical, Appendix \ref{apa} concludes the paper by highlighting several challenging open questions.

\subsection*{Notation} Let us fix some basic notation. In the what follows, $\Omega$ is the periodic box $\mathbb T^d$, $d\in \mathbb N$, equipped with the Lebesgue measure $dx$. Let $d\mut:=dx\, dt$ be the Lebesgue measure on $[0,T]\times \Omega$. 
We will also use the notations $\R^{\nn}$, $\R^{\nn}_s$ and $\Mmm$ for the spaces of $N\times N$ matrices, symmetric matrices and positive semidefinite matrices, resp. For $A,B\in \R^{\nn}_s$, we write $A\geq B$ when $A-B\in \Mmm$ (the L\"owner order \cite{L30}), and $A:B$ for the scalar product of $A$ and $B$ generated by the Frobenius norm $|\cdot|$.  The symbol $I$ will stand for the identity matrix of a relevant size. The symbols $\ms(\mathcal Q),\ \ms(\mathcal Q;\R^n),\ \ms(\mathcal Q;\Mmm)$ will denote the spaces of finite signed, vector- or matrix-valued Radon measures, respectively, over a metric space $\mathcal Q$. Let $X:=C(\Omega)$ (the space of continuous functions). Note that $X^*=\ms(\Omega)$.  For two measurable functions $u,v:\Omega\to \R^m$, $m\in \mathbb N$, we use the shorthand $$(u,v):=\int_\Omega u(x)\cdot v(x)\,dx.$$ In a similar spirit, for $\mu \in X^*$ and $f\in X$, we denote $$(f,\mu):=\int_\Omega f\,d\mu.$$ To avoid confusion with space-time duality, we reserve the notation $$\la f,\mu \ra :=\int_{[0,T]\times \Omega} f\,d\mu$$ for $v\in C([0,T]\times \Omega;\R^m)$ and $\mu \in \ms([0,T]\times \Omega;\R^m)$.
\section{Basic framework}

\label{setti}

Before presenting the duality scheme, we introduce in this section a convenient abstract framework that encompasses the compressible fluid models under consideration.

\begin{remark} \label{contsub} For purely technical reasons, in the ``primal'' setting  we restrict attention to continuous solutions (and, later, subsolutions). However, the dual formulation will be constructed in such a way that discontinuous solutions can also be recovered. Additional details  on this sensitive issue can be found in Section \ref{s:bur} and Appendix \ref{apa}. \end{remark}

We begin with a preliminary, semi-rigorous formulation; the precise definitions will be given below.

\begin{setting} Let
	\begin{enumerate} \item $n, N\in \mathbb N,\ Z\in \mathbb Z_{\ge 0},$ \item  $\cf:\dom \cf \subset \R^n\to \Mm$ be a L\"owner-convex \cite{KR36} matrix function,  \item $\dom \cf$ be convex and have a non-empty interior $\ooo$, \item  $\cf$ be continuously  differentiable  in $\ooo$,  \item $L: D(L)\subset X_s^{\nn}\to X^n$ be a closed densely defined linear operator, \item $\lc: D(\lc)\subset X^n \to X^Z$ be a closed densely defined linear operator with closed range, \item the initial condition satisfy
		\be \label{incond} v_0\in  C(\Omega;\dom \cf)\cap \ker \lc. \ee \end{enumerate} We seek a continuous function $$v:[0,T]\to X^n$$ solving the initial-value problem \be \label{e:aeuler}\p_t v= L(\cf(v)), \quad v(0,\cdot)=v_0\in X^n,\ee subject to the constraints \begin{gather} \label{e:aeulerlc} \lc v=0,\\ \label{e:aeulero}  v(t,x)\in \dom \cf.\end{gather} 
	\end{setting}

\begin{assumption} For simplicity, we will tacitly assume that $L$ and $\lc$, together with their adjoints $L^*$ and $\lc^*$, are differential or integral operators (with respect to the spatial variable) defined on their natural domains. In particular, they can be lifted to act on time-dependent functions. \end{assumption}


\begin{conv} Define the \emph{entropy} function by \be \label{e:entr1}\ce: \R^n \to \R_+,\ \ce(v):=\begin{cases}\frac 1 2 \tr \cf(v), & v\in \dom \cf, \\ +\infty, & v\not\in \dom \cf. \end{cases}\ee It follows from the L\"owner convexity assumption that $\ce$ is convex. \end{conv}

		\begin{remark} {Here we adopt a slightly cleaner definition of entropy than in \cite{V25}. There,  the technical assumption $\ce(0)=0$ that was required to work in Orlicz spaces, which are not employed in this paper,  forced us to define the entropy as
				$
				\ce=\frac 1 2 (\tr (\cf(\cdot)-\cf(0))).
				$ 
		} Of course, this is just a minor cosmetic issue.  \end{remark}
\begin{assumption} \label{asco} We assume that $\ce$ is strictly convex (but not necessarily uniformly convex) in $\ooo$ and lower-semicontinuous in $\R^n$. \end{assumption}
 \begin{conv} We now define a transformation\footnote{See Remark \ref{remsharp}.} that will serve as a systematic change of variables. To any given $v\in \ooo$ we associate the vector \be \mom:=\nabla \ce(v), \label{sharp}\ee where the gradient $\nabla$ is taken with respect to $v$. 
 Since $\ce$  is strictly convex in $\ooo$, the map $$\nabla \ce: v \mapsto \mom$$ is $C^1$-smooth and injective. Moreover, $$v=\nabla \ce^*(\mom)$$ for any $\mom\in \nabla \ce(\ooo)$, where $\ce^*$ is the Legendre transform of $\ce$. \end{conv}
 
 \begin{remark} Note that $\nabla \ce(\ooo)$ need not be convex, since full Legendre duality \cite[Section 26]{rock} is not imposed, being incompatible with the applications in Section~\ref{Sec3}. \end{remark}
 
 {\begin{remark} \label{remsharp} The transformation \eqref{sharp} is well known in the theory of systems of conservation laws. It originates in the work of Godunov \cite{God1}, see also \cite{Godtrib} for a modern survey. The resulting variables were originally introduced to rewrite systems of conservation laws in symmetric form and are known by several names, including \emph{entropy variables} \cite{Tad87,Tad26}, \emph{main field} \cite{RS81}, and \emph{conjugate variables} \cite{Gav24}.
 		In this paper, we refer to them as \emph{sharp} variables, by loose analogy with the musical isomorphism in differential geometry. We stress that, in this paper, we employ the change of variables \eqref{sharp} well beyond the classical realm of first-order systems of conservation laws, and that our strategy is entirely unrelated to symmetrization.  \end{remark}}

 \begin{conv}  Define the \emph{total entropy} by \begin{equation} \label{e:consp1} K(t):=\int_\Omega \ce (v(t,x))\,d x.\end{equation}
  \end{conv}
  
 
\begin{assumption}  \label{a:consc} We will focus on the situation when $L$ satisfies the formal \emph{conservativity} condition \be \label{e:acons} \int_\Omega \cf(v): (L^*(\mom))\,dx=0\ee for all  \emph{smooth} vector fields $v: \Omega \to \ooo$  satisfying the linear constraint \eqref{e:aeulerlc}. \end{assumption}
Assumption \ref{a:consc} and system \eqref{e:aeuler}--\eqref{e:aeulero} formally imply the following relation for the sharp variable $\mom$:
\be \label{e:aeuler2}\p_t (\mom\mut)_l+L^* (\mom\mut):\partial_l \cf(\nabla \ce^*(\mom))=(\lc^* \pp)_l, \quad l=1,\dots, n.\ee Here $\pp:[0,T]\to (X^Z)^*$ plays the role of a Lagrange multiplier associated with the linear constraint \eqref{e:aeulerlc}; see Remark \ref{sharpq} for an explicit example. Moreover, \eqref{e:acons} formally implies that the total entropy is conserved: \be \label{e:entc} K(t)=\int_\Omega \ce (v_0)\,d x=:K_0,\ t\in[0,T].\ee We present the  formal proofs of \eqref{e:aeuler2} and \eqref{e:entc} in Appendix \ref{apb}.

Problem \eqref{e:aeuler}--\eqref{e:aeulero} admits the following natural weak formulation: \be \label{e:w1}\int_0^T \left[(v,\lc^* w)+(v,\p_t a)+(\cf(v), L^*a)\right]\, dt+(v_0, a(0))=0\ee for all smooth vector fields $a: [0,T]\to X^n$, $a(T)= 0$, and $w: [0,T]\to X^Z$.

Let us now rewrite problem \eqref{e:w1} in terms of the test functions $B:=L^*a$ and $E:=\p_t a+\lc^* w$. The relationship between $B$ and $E$ can alternatively be characterized by the conditions \be\label{e:constr1}\p_t B-L^* E\in \operatorname{ran} (L^*\lc^*), \quad B(T)=0.\ee  Now observe that \begin{multline} \label{e:vab} (v_0, a(0))=-\int_0^T \left(v_0,   \p_t a\right)\,dt\\ =-\int_0^T \left(v_0,   \p_t a\right)\,dt-\int_0^T \left(\lc v_0,   w\right)\,dt=-\int_0^T \left(v_0,   E\right)\,dt. \end{multline}  
Hence, \eqref{e:w1} becomes \be \label{e:w2sm}\int_0^T \left[(v-v_0,E)+(\cf(v), B)\right]\,dt=0\ee for all smooth vector fields $B: [0,T]\to X^{\nn}_s$, $E: [0,T]\to X^n$ satisfying the constraints \eqref{e:constr1}.

Note that \eqref{e:constr1} admits the following weak formulation, which remains meaningful when $B$ and $E$ are merely finite Radon measures on $[0,T]\times \Omega$ taking values in the corresponding Euclidean spaces: \be \label{e:constrweak}\la \p_t \Psi, B \ra+\la L \Psi, E \ra=0, \ee for all smooth vector fields $\Psi: [0,T]\to X^{\nn}_s$, $\lc L \Psi=0$, $\Psi(0)= 0$. Consequently, \eqref{e:w2sm} also admits a  weak form: \be \label{e:w2}\la v-v_0,E \ra+\la \cf(v), B \ra=0\ee for all pairs $ (E,B)$ subject to constraint \eqref{e:constrweak}.

Motivated by this discussion, we adopt the following definitions, where we tacitly assume  \eqref{incond}.
\begin{definition}[Weak solutions] A function \be \label{e:vclass} v\in C([0,T]\times \Omega;\R^n) \ee fulfilling \eqref{e:aeulero} is a \emph{weak solution} to \eqref{e:aeuler}--\eqref{e:aeulero} if \be \label{freg} \cf(v)\in  C([0,T]\times \Omega;\Mm)\ee and \eqref{e:w2} holds for all pairs \be \label{e:be} (E,B)\in \ms ([0,T]\times \Omega;\R^n)\times \ms([0,T]\times \Omega;\R^{\nn}_s)\ee subject to constraint \eqref{e:constrweak}. \end{definition}

\begin{definition}[Subsolutions] A pair of functions $$(v,M)\in C([0,T]\times \Omega;\R^n)\times C([0,T]\times \Omega;\Mm),$$ satisfying  $$\cf(v)\leq M,$$ is a \emph{subsolution} to \eqref{e:aeuler}--\eqref{e:aeulero} provided \be \label{e:w2sub}\la v-v_0,E \ra+\la M, B \ra=0\ee holds for all pairs \be \label{e:be3} (E,B)\in \ms ([0,T]\times \Omega;\R^n)\times \ms([0,T]\times \Omega;\R^{\nn}_s)\ee subject to constraint \eqref{e:constrweak}. \end{definition}
Obviously, if $v$ is a weak solution, then $(v,\cf(v))$ is a subsolution. Accordingly, to be consistent with the preceding discussion, we define the total entropy of a subsolution by\begin{equation} \label{tess}  \tilde K(t):=\frac 1 2 \int_\Omega \tr (M(t))\,dx.\end{equation}

\begin{conv}[Weight] Fix a smooth function $\wei:[0,T]\to \R_+$ that is bounded away from $0$ and $\infty$.  Let $\Wei(t):=\int_t^T \wei(s)\, ds$. \end{conv}

One does not expect all weak solutions to satisfy the important relations \eqref{e:aeuler2} and \eqref{e:entc}.
We reserve the term \emph{strong solution} for those that do, in the following sense.

\begin{definition}[Strong solutions] \label{d:strongclass} Assume that $v_0\in C(\Omega;\ooo)\cap \ker \lc$. A function \be v\in C([0,T]\times \Omega; \ooo) \label{e:vclassss}\ee satisfying \be\label{e:strongclass} \p_t (\Wei\mom\mut)\in \ms ([0,T]\times \Omega;\R^n), \Wei L^* (\mom\mut)\in \ms ([0,T]\times \Omega; \R^{\nn}_s) \ee and \be\label{e:amvm} c I \mut +\Wei  L^*(\mom\mut)\geq 0\ \mathrm{as}\ \mathrm{symmetric-matrix-valued}\ \mathrm{measures}\ee with a uniform constant $c\geq 0$ is called a \emph{strong solution} of  \eqref{e:aeuler}--\eqref{e:aeulero} if \begin{enumerate} \item $v$ it is a weak solution, \item  \eqref{e:entc} holds, \item \eqref{e:aeuler2} is fulfilled with some $\pp$ such that \be\label{e:strongclasspi} \Wei\pp \in \ms ([0,T]\times \Omega;\R^Z), \ \Wei\lc^*\pp \in \ms ([0,T]\times \Omega;\R^n).\ee \end{enumerate} \end{definition}

\begin{remark} At one point, we will slightly abuse terminology by referring to the corresponding pair $(v,\pi)$ as a strong solution.\end{remark}

\begin{remark} It is obvious that a smooth  solution to  \eqref{e:aeuler}--\eqref{e:aeulero} with values in $\ooo$ is a strong solution in the sense of Definition \ref{d:strongclass}. We aim to work with a notion of strong solution that is as weak as possible while still ensuring the properties needed below. To fix the ideas, we restrict this definition to functions taking values in~$\ooo$. In contrast, our (continuous) weak solutions are allowed to take values outside $\ooo$. We expect that the stronger constraint imposed on strong solutions is not essential and can be removed or relaxed with additional technical effort, including appropriate modifications of the definitions. \end{remark}

\begin{remark} \label{r:r25} Since $\Wei(T)=0$,  requirement \eqref{e:strongclass} should be understood in the sense that there exist a vector-valued measure, denoted $\p_t (\Wei\mom\mut)$, and a symmetric-matrix-valued measure, denoted $\Wei L^* (\mom\mut)$, such that the identities $$\la \phi, \p_t (\Wei\mom\mut) \ra=-\la   \p_t \phi , \Wei\mom \mut \ra,$$ 
	$$\la \Phi, \Wei L^* (\mom\mut)\ra=\la   L \Phi , \Wei \mom\mut\ra$$ hold for any pair $(\phi,\Phi)\in C^\infty([0,T]\times \Omega;\R^n\times \R^{\nn}_s)$ with $\phi(0)=0$.  \end{remark} \begin{remark} \label{r:regul1} Let us explain why \eqref{e:aeuler2} makes sense for $v$ and $\pp$ satisfying \eqref{e:vclassss}, \eqref{e:strongclass}, \eqref{e:strongclasspi}. Obviously, \eqref{e:vclassss} implies $$\mom \in C([0,T]\times \Omega;\R^n)$$ and $\cf(\nabla \ce^*(\mom))= \cf(v)\in C([0,T]\times \Omega; \R^{\nn}_s)$. Thus  \eqref{e:aeuler2} is rigorously meaningful when written in the form \be \label{o:aeuler2w}\wei (\mom\mut)_l+\p_t (\Wei\mom\mut)_l+\Wei L^* (\mom\mut):\partial_l \cf(\nabla \ce^*(\mom))=(\Wei\lc^* \pp)_l,\ee interpreted as equality of  finite Radon measures.  \end{remark}
	
	\begin{remark} \label{r:equiv} Definition \ref{d:strongclass} is actually independent of the weight function $\Wei$. Indeed, let $v$ be a strong solution, $\wei_1$ be another positive scalar function bounded away from $0$ and $\infty$, and $\Wei_1=\int_t^T \wei_1(s)\, ds$. Obviously, \be \Wei(t) \lesssim  \Wei_1(t)\lesssim \Wei(t). \label{weights}\ee Hence, $$\Wei_1 L^* (\mom)\in \ms ([0,T]\times \Omega; \R^{\nn}_s),$$ and  \begin{multline*} \p_t (\Wei_1\mom)=-\wei_1 \mom\mut+ \Wei_1 \p_t (\mom)\\=-\wei_1 \mom\mut+\frac {\Wei_1} {\Wei}\wei \mom\mut+\frac {\Wei_1} {\Wei} \p_t (\Wei\mom)\in \ms ([0,T]\times \Omega;\R^n).\end{multline*} Moreover, \eqref{weights} implies \eqref{e:amvm} for $\Wei_1$ (possibly with a different uniform constant $c$) and also yields \eqref{e:strongclasspi} for $\Wei_1$. \end{remark}

\section{Duality} \label{duality}

In order to set up the duality framework, we first need to specify the primal problem. It consists in finding a weak solution to \eqref{e:aeuler}–\eqref{e:aeulero}, regardless of the fact that there might be no global existence for arbitrary initial data, that minimizes a suitable weighted time integral of the total entropy, namely $\int_0^T \wei(t) K(t)\, dt$. (As noted in \cite[Remark 3.12]{V25}, this idea, for appropriate $\wei$, can be viewed as a ``rough'' Dafermos principle). 
The task of selecting a weak solution that minimizes $\int_0^T \wei(t) K(t)\, dt$ can be implemented via the saddle-point problem \be\label{e:sadd1}\mathcal I(v_0, T)=\inf_{v}\sup_{E,B:\,\eqref{e:constrweak}}\left[\la v-v_0,E \ra +\frac 1 2 \la \cf(v), \wei I\mut+2B\ra\right].\ee The infimum in \eqref{e:sadd1} is taken over all $v\in C([0,T]\times \Omega;\R^n)$ fulfilling \eqref{e:aeulero} and \eqref{freg}, and the supremum is taken over all pairs $(E,B)$ satisfying \eqref{e:be} and the linear constraint \eqref{e:constrweak}. 
(If a weak solution does not exist, then $\mathcal I(v_0,  T)=+\infty$).

The dual problem is \be\label{e:sadd2}\mathcal J(v_0, T)=\sup_{E,B:\,\eqref{e:constrweak}}\inf_{v}\left[\la v-v_0,E \ra +\frac 1 2 \la \cf(v), \wei I\mut+2B\ra\right],\ee where $v,E,B$ are varying in the same function spaces as above. 

We also consider a “relaxed” version of the primal problem, consisting in selecting subsolutions that minimize the weighted time integral of the total entropy $\int_0^T \wei(t) \tilde K(t)\, dt$. This 
 can be realized through the saddle-point problem \be\label{e:sadd1sub}\tilde{\mathcal I}(v_0, T)=\inf_{v,M:\cf(v)\leq M}\sup_{E,B:\,\eqref{e:constrweak}}\left[\la v-v_0,E \ra +\frac 1 2 \la M, \wei I\mut+2B\ra\right].\ee The infimum in \eqref{e:sadd1sub} is taken over all $v\in C([0,T]\times \Omega;\R^n)$ and $M\in C([0,T]\times \Omega;\R^{\nn}_s)$, and the supremum is taken over all pairs $(E,B)$ satisfying \eqref{e:be} and  \eqref{e:constrweak}.

The dual problem is \be\label{e:sadd2sub}\tilde{\mathcal J}(v_0, T)=\sup_{E,B:\,\eqref{e:constrweak}}\inf_{v,M:\cf(v)\leq M}\left[\la v-v_0,E \ra +\frac 1 2 \la M, \wei I\mut+2B\ra\right],\ee where $v,M,E,B$ are varying in the same function spaces as above. 

\begin{remark} Since $\inf\sup\geq \sup\inf$, one has \be \label{ijj} \mathcal I(v_0, T)\ge \tilde{\mathcal I}(v_0, T)\ge \tilde{\mathcal J}(v_0, T), \ \mathcal I(v_0, T)\ge \mathcal J(v_0, T)\ge \tilde{\mathcal J}(v_0, T).\ee Note that the $\sup$ in \eqref{e:sadd1} is always $+\infty$ if $v$ is not a weak solution, whence $\mathcal I(v_0,T)=+\infty$ if there are no weak solutions. The same applies to \eqref{e:sadd1sub} and $\tilde{\mathcal I}(v_0, T)$ in the case of  subsolutions. On the other hand, if  there exists a strong solution $v$, then the corresponding $\sup$ in \eqref{e:sadd1} is equal to $\frac 12\la \cf(v), \wei I \mut \ra=\int_0^T \wei K(t)\,dt=\int_0^T \wei K_0\,dt=\Wei(0)K_0$, which yields $\mathcal I(v_0,T)\leq \Wei(0)K_0$. Finally, testing by $(E,B)=(0,0)$ we see that $$\tilde{\mathcal J}(v_0, T)\geq \frac 1 2 \inf_{v,M:\cf(v)\leq M}\la M, \wei I\mut \ra \geq \inf_{v,M:\cf(v)\leq M}\la \cf(v), \wei I\mut\ra \geq 0$$ because $\cf(v(t,x))$ is positive semidefinite.    \label{rweak}\end{remark} 

It is easy to see that any solution to \eqref{e:sadd2sub} necessarily satisfies \be\label{e:bwe} \wei I\mut+2B\geq 0\ \mathrm{as}\ \mathrm{symmetric-matrix-valued}\ \mathrm{measures}.\ee
Consider the nonlinear functional $$\mathcal K: \ms([0,T]\times \Omega;\R^n)\times \ms([0,T]\times \Omega;\R^{\nn}_s)\to \R$$ defined by the formula
\be\label{e:defk1} \mathcal K(E,B)=\inf_{\cf(z)\le M} \la z,E \ra +\frac 1 2 \la M, \wei I\mut+2B\ra,\ee  where the infimum is taken over all pairs $(z,M)\in C([0,T]\times \Omega;\R^n)\times C([0,T]\times \Omega;\R^{\nn}_s)$.

Then \eqref{e:sadd2sub} is equivalent to \be\label{e:conc}\tilde{\mathcal J}(v_0, T)=\sup_{E,B:\,\eqref{e:constrweak},\eqref{e:bwe}}-\la v_0,E \ra +\bigk,\ee the supremum is taken over all pairs $(E,B)$ belonging to the class \eqref{e:be}.

The following theorem shows that a strong solution to problem \eqref{e:aeuler}--\eqref{e:aeulero} on a certain, possibly small, time interval $[0,T]$  determines a solution to the dual optimization problem \eqref{e:conc} in a neat, explicit way. Furthermore, the \textbf{interval need not be small} provided that the weight is suitably chosen, see Remark \ref{r:timeei}. 

\begin{theorem}[Consistency] \label{t:smooth} Assume \eqref{e:acons}.  Let $v_0\in C(\Omega;\ooo)\cap \ker \lc$. Let $(v,\pi)$ be a strong solution to \eqref{e:aeuler}--\eqref{e:aeulero}   satisfying \be\label{e:pd++} \wei I \mut +2\Wei (t) L^*(\mom\mut)\ge 0\ \ \mathrm{as}\ \mathrm{symmetric-matrix-valued}\ \mathrm{measures}.\ee  Then $\mathcal I(v_0,T)=\tilde{\mathcal J}(v_0,T)=\tilde{\mathcal I}(v_0,T)=\mathcal J(v_0,T)=\Wei(0)K_0$. The pair $(E_+,B_+)$ defined by $$B_+=\Wei  L^*(\mom\mut),\, E_+=\p_t (\Wei \mom\mut)-\Wei \lc^* \pp,$$  is a maximizer of \eqref{e:conc}.  Moreover, these formulas can be inverted to represent\footnote{See Remark \ref{remdu}.} $\mom$ in terms of $E_+$ on $[0,T)\times \Omega$ via the duality relation
	\be \label{e:ta1} \la\psi, \mom\mut \ra= -\la\int_0^t\frac {\psi(s,\cdot)}{\Wei(s)}\,ds, E_+\ra,\ee for any smooth test function $\psi(t,x)$ that vanishes in a neighborhood of $\{T\}\times \Omega$ and satisfies $\lc \psi=0$.

\end{theorem}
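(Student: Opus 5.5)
By the chain \eqref{ijj} and the bound $\mathcal I(v_0,T)\le \Wei(0)K_0$ from Remark \ref{rweak}, all four quantities lie in $[\tilde{\mathcal J}(v_0,T),\Wei(0)K_0]$. It therefore suffices to show that the explicit pair $(E_+,B_+)$ is admissible for \eqref{e:conc} and gives the value $-\la v_0,E_+\ra+\mathcal K(E_+,B_+)\ge \Wei(0)K_0$. This forces every inequality to be an equality and makes $(E_+,B_+)$ a maximizer.

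\textbf{Admissibility.} By \eqref{e:strongclass} and \eqref{e:strongclasspi}, $E_+$ and $B_+$ are finite Radon measures. Condition \eqref{e:bwe} is exactly hypothesis \eqref{e:pd++}. For \eqref{e:constrweak}, take $\Psi$ with $\lc L\Psi=0$ and $\Psi(0)=0$, and apply Remark \ref{r:r25} with $\Phi=\p_t\Psi$ and $\phi=L\Psi$; note $\phi(0)=0$. This gives
\[
\la \p_t\Psi,B_+\ra=\la L\p_t\Psi,\Wei\mom\mut\ra,\qquad \la L\Psi,\p_t(\Wei\mom\mut)\ra=-\la \p_t L\Psi,\Wei\mom\mut\ra .
\]
These two terms cancel. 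The remaining term is $\la L\Psi,\Wei\lc^*\pp\ra=\la \lc L\Psi,\Wei\pp\ra=0$.

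\textbf{Value.} We compute $\mathcal K(E_+,B_+)$ by a pointwise convexity argument. For any $(z,M)$ with $\cf(z)\le M$, Löwner monotonicity and \eqref{e:pd++} let us replace $M$ by $\cf(z)$. Next, \eqref{o:aeuler2w} expresses $E_+$ as $-\wei\mom\mut-\Wei L^*(\mom\mut):\partial\cf(v)$. Hence
\[
\la z,E_+\ra+\tfrac12\la \cf(z),\wei I\mut+2B_+\ra=\Big\la \wei\big(\ce(z)-\mom\cdot z\big)\mut\Big\ra+\Big\la \cf(z)-\textstyle\sum_l z_l\,\partial_l\cf(v),\;\Wei L^*(\mom\mut)\Big\ra .
\]
The same expression evaluated at $z=v$ is a lower bound. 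For the first term this follows from convexity of $\ce$ with $\mom=\nabla\ce(v)$. For the second, Löwner convexity gives $\cf(z)-\cf(v)-\sum_l(z-v)_l\partial_l\cf(v)\ge0$, so we may pair it against the nonnegative measure $cI\mut+\Wei L^*(\mom\mut)$ from \eqref{e:amvm}.

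\textbf{Main obstacle.} The pairing step above is where I expect the difficulty. Only \eqref{e:pd++} is directly available, so I would rather combine the two terms: with weights $\wei\mut$ and $2\Wei L^*$, the sum is exactly the Löwner-convexity remainder of $\cf$ paired with $\tfrac12(\wei I\mut+2B_+)\ge0$. This relies on $\ce=\tfrac12\tr\cf$, so that $\nabla\ce=\tfrac12\tr\partial\cf$, and it handles the matter cleanly. Measure-valued pairings against continuous $z$ are legitimate because $z$, $\cf(z)$ and $\partial\cf(v)$ are continuous; here $v\in\ooo$ compactly, so $\partial\cf(v)$ is continuous.

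\textbf{Conclusion of the value computation.} Hence the infimum is attained at $z=v$, $M=\cf(v)$. Then $-\la v_0,E_+\ra+\mathcal K=\la v-v_0,E_+\ra+\tfrac12\la\cf(v),\wei I\mut+2B_+\ra$. By \eqref{e:w2}, since $(E_+,B_+)$ satisfies \eqref{e:constrweak} and $v$ is a weak solution, this equals $\tfrac12\la\cf(v),\wei I\mut\ra=\int_0^T\wei K\,dt=\Wei(0)K_0$, using \eqref{e:entc}.

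\textbf{Inversion \eqref{e:ta1}.} Set $\phi(t,x)=\int_0^t\psi(s,\cdot)/\Wei(s)\,ds$. This is smooth, since $\psi$ vanishes near $t=T$ where $\Wei\to0$, and it satisfies $\phi(0)=0$ and $\lc\phi=0$. Then
\[
\la\phi,E_+\ra=-\la\p_t\phi,\Wei\mom\mut\ra-\la\lc\phi,\Wei\pp\ra=-\la\psi,\mom\mut\ra,
\]
which is \eqref{e:ta1}. Some care is needed with the test class in Remark \ref{r:r25}, since $\phi$ need not vanish at $T$; it is constant near $T$, and $\Wei(T)=0$, which handles this.
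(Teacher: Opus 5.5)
Your proposal is correct and follows the paper's proof essentially step by step. The shared steps are: admissibility via Remark \ref{r:r25}; the identity $(E_+)_l=-\frac12(\wei I\mut+2B_+):\partial_l\cf(v)$ obtained from \eqref{o:aeuler2w}; the L\"owner tangent inequality paired against the nonnegative measure $\wei I\mut+2B_+$; evaluation at $(v,\cf(v))$ using \eqref{e:w2} and \eqref{e:entc}; and the same integration by parts for \eqref{e:ta1}. The only thing to drop is your preliminary idea of bounding the $\Wei L^*(\mom\mut)$-term separately via \eqref{e:amvm}, which fails because that measure alone need not be nonnegative; the combined argument you settle on is exactly the paper's.
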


\begin{proof} 
 By construction,  the pair $(E_+,B_+)$ belongs to $\ms([0,T]\times \Omega;\R^n)\times \ms([0,T]\times \Omega;\R^{\nn}_s)$.  Let us show that this pair verifies  \eqref{e:constrweak}. Indeed, by Remark \ref{r:r25},  \begin{multline*} \la \p_t \Psi, B_+ \ra+\la L \Psi, E_+ \ra=  \la \p_t \Psi, \Wei  L^*(\mom\mut) \ra+\la L \Psi,\p_t (\Wei \mom\mut)-\Wei \lc^* \pp \ra\\\ = -\la \lc L \Psi,\Wei ^* \pp \ra =0\end{multline*} for all smooth vector fields $\Psi: [0,T]\to X^{\nn}_s$, $\lc L \Psi=0$, $\Psi(0)= 0$.
 
 Moreover, \eqref{e:pd++} implies \eqref{e:bwe} for $B_+$. We now claim that \be \label{e:vret1} \frac 1 2 (\wei I\mut+2B_+):\partial_l\cf(v)+(E_+)_l=0,\quad l=1,\dots,n,\ee as Radon measures.  Indeed, using \eqref{o:aeuler2w} we compute \begin{multline}\frac 1 2 (\wei I\mut+2B_+):\partial_l\cf(v)+(E_+)_l\\=\frac 1 2 (\wei I\mut+2\Wei L^*(\mom\mut)):\partial_l\cf(v)+\p_t (\Wei \mom\mut)_l-(\Wei \lc^* \pp)_l\\ =\wei \partial_l\ce(v)\mut +\Wei L^*(\mom\mut):\partial_l\cf(v)+\p_t (\Wei \mom\mut)_l-(\Wei \lc^* \pp)_l \\= \wei (\mom\mut)_l+\p_t (\Wei\mom\mut)_l+\Wei L^* (\mom\mut):\partial_l \cf(\nabla \ce^*(\mom))-(\Wei\lc^* \pp)_l
 	=0.\end{multline}
 
On the other hand, $v$ satisfies \eqref{e:w2} with test functions $(E_+,B_+)$ because it is, in particular, a weak solution. Thus we have \be \label{e:w2+}\la v-v_0,E_+ \ra+\la \cf(v), B_+\ra=0.\ee Hence, by \eqref{e:vret1}, \be \label{f45} -\la v_0,E_+\ra +\la \cf(v), B_+\ra =\frac 1 2 \sum_{l=1}^n \la v_l\partial_l\cf(v),\wei I\mut+2B_+\ra.\ee Employing \eqref{f45} and \eqref{e:entc}, we obtain   \begin{multline} \label{e:w3+}\frac 1 2 \la \cf(v),\wei I\mut+2B_+\ra -\frac 1 2 \sum_{l=1}^n \la v_l\partial_l\cf(v),\wei I\mut+2B_+\ra  \\= \la v_0,E_+ \ra+\la \wei(t)\ce (v(t)), \mut\ra =\la v_0,E_+\ra + \int_0^T\wei(t)K(t)\,dt=\la v_0,E_+\ra + \Wei(0)K_0.\end{multline} 

By Remark \ref{rweak}, we have $\mathcal I(v_0, T)\leq \Wei(0)K_0$. Thus, in view of \eqref{ijj}, it suffices to prove that \be \label{e:claim1}  -\la v_0,E_+\ra +\bigkp\ge \Wei(0)K_0, \ee which ensures that there is no gap between $\mathcal I$ and $\tilde{\mathcal J}$. Indeed, the L\"owner convexity of $\cf$ means that the function $$\Phi: y\to \cf(y):P$$ is convex with respect to $y\in \dom \cf$ for any matrix $P\ge 0$. Consequently, for any parameter $\xi\in \ooo$,  the function $$y\mapsto -y\cdot \nabla \Phi (\xi)+\Phi (y)$$ attains its minimum at $y=\xi$. 
Leveraging this fact and using \eqref{e:vret1}, \eqref{e:w3+} and \eqref{e:bwe} for $B_+$, we conclude \begin{multline*} -\la v_0,E_+\ra +\bigkp\\=-\la v_0,E_+\ra +\inf_{\cf(z)\le M}\left[-\frac 1 2\sum_{l=1}^n\la z_l \partial_l\cf(v),\wei I\mut+2B_+\ra+\frac 1 2 \la M, \wei I\mut+2B_+\ra\right]\\ \ge  -\la v_0,E_+\ra +\inf_{z\in C([0,T]\times \Omega;\dom \cf)} \left[-\frac 1 2\sum_{l=1}^n\la z_l \partial_l\cf(v),\wei I\mut+2B_+\ra+\frac 1 2 \la \cf(z), \wei I\mut+2B_+\ra\right]\\
\\\ge -\la v_0,E_+\ra -\frac 1 2\sum_{l=1}^n\la v_l \partial_l\cf(v),\wei I\mut +2B_+\ra+\frac 1 2 \la \cf(v), \wei I\mut +2B_+\ra \\=\Wei(0)K_0. \end{multline*} 

Finally, using Remark \ref{r:r25} we get \begin{multline*}\la\psi, \mom\mut \ra=\la\frac \psi \Wei, \Wei\mom\mut \ra=\la\p_t\left(\int_0^t\frac { \psi(s,\cdot)}{\Wei(s)}\,ds\right), \Wei\mom\mut \ra+\la \lc\left(\int_0^t\frac { \psi(s,\cdot)}{\Wei(s)}\,ds\right), \Wei\pp\ra \\ = -\la\int_0^t\frac {\psi(s,\cdot)}{\Wei(s)}\,ds, \p_t (\Wei \mom\mut)\ra + \la\int_0^t\frac {\psi(s,\cdot)}{\Wei(s)}\,ds, \Wei \lc^*\pp\ra = -\la\int_0^t\frac {\psi(s,\cdot)}{\Wei(s)}\,ds, E_+\ra,\end{multline*}  for any smooth function $\psi$ that vanishes in a neighborhood of $\{T\}\times \Omega$ and satisfies $\lc \psi=0$. \end{proof}

\begin{remark}[Adapting the weight] \label{r:timeei}   Let $v$ be a strong solution to \eqref{e:aeuler}--\eqref{e:aeulero}.  Then the weight $\wei$ can be selected to guarantee the consistency on any interval $[0,T_1]$, $T_1<T$, cf. a similar feature in \cite{V25}. Indeed, by Remark \ref{r:equiv} with $\wei_1
	\equiv 1$,  for any strong solution on $[0,T]$ we have that \be\label{e:amvmcc} c I \mut +(T-t)  L^*(\mom\mut)\geq 0\ \mathrm{as}\ \mathrm{symmetric-matrix-valued}\ \mathrm{measures}\ee with a uniform constant $c\geq 0$. Thus, for any $T_1<T$, there exists $\gamma>0$ such that \be \label{e:weiwei} \gamma I\mut +2 L^*(\mom\mut)\ge 0\ee on $[0,T_1]\times \Omega$. It is straightforward from Definition \ref{d:strongclass} that $v$ (more accurately, the restriction $v|_{[0,T_1]\times \Omega}$) is a strong solution on  $[0,T_1]$.  Consider the weight \be \wei(t):=\exp(-\gamma t)\ee and let $\Wei(t):=\int_t^{T_1} \wei(s)\, ds\geq 0$ (note that we are now working on the interval $[0,T_1]$). It is easy to see that \be \label{1l}  \gamma\Wei\leq \wei.\ee From \eqref{e:weiwei} and \eqref{1l}, we get \eqref{e:pd++} that yields the consistency on $[0,T_1]$.  \end{remark}

\begin{remark} \label{remdu} Since the test functions $\psi$ should satisfy the constraint $\lc \psi=0$, $\mom$ can only be determined from \eqref{e:ta1} up to an additive term lying in the range of $\lc^*$. However, to retrieve the correct solution one can use the information that $\lc v=0$. See also Open Problem \ref{r:time}. This issue is absent when there is no linear constraint, i.e., when $\lc\equiv 0$. \end{remark}

\begin{theorem}[Dafermos principle] \label{dpr1} Let $v_0\in C(\Omega;\ooo)\cap \ker \lc$. Let $v$ be a strong solution to \eqref{e:aeuler}--\eqref{e:aeulero} with total entropy \eqref{e:consp1} and $(u,M)$ a subsolution with total entropy \eqref{tess}. Then, for any $0\leq t_0<t_1\leq T$, it cannot simultaneously be that $\tilde K(t)\le K(t)$ for $t\in [0,t_0]$ and $\tilde K(t)<K(t)$ for $t\in(t_0,t_1)$. In particular, it is impossible that $\tilde K(t)<K(t)$ for $t\in(0,\epsilon)$, $\epsilon>0$. \end{theorem}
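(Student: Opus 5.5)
Argue by contradiction, using Theorem \ref{t:smooth} with a suitably chosen weight. Suppose that $\tilde K\le K$ on $[0,t_0]$ and $\tilde K<K$ on $(t_0,t_1)$. Since $K\equiv K_0$ by \eqref{e:entc}, we have $\tilde K\le K_0$ on $[0,t_0]$ and $\tilde K<K_0$ on $(t_0,t_1)$. The idea is to restrict everything to a time interval $[0,T_1]$ with $t_0<T_1<t_1$ and to pick a weight for which both of the following hold:

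\begin{itemize}
\item Theorem \ref{t:smooth} applies, which gives $\tilde{\mathcal I}(v_0,T_1)=\Wei(0)K_0$;
\item the subsolution $(u,M)$, restricted to $[0,T_1]$, gives $\int_0^{T_1}\wei\tilde K\,dt<\int_0^{T_1}\wei K_0\,dt=\Wei(0)K_0$.
\end{itemize}

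The second item contradicts the first, because a subsolution is admissible in \eqref{e:sadd1sub}. For such an admissible pair the inner supremum is exactly $\frac12\la M,\wei I\mut\ra=\int_0^{T_1}\wei\tilde K\,dt$, since $\la u-v_0,E\ra+\la M,B\ra=0$ for all $(E,B)$ satisfying \eqref{e:constrweak}. Hence $\tilde{\mathcal I}(v_0,T_1)\le\int_0^{T_1}\wei\tilde K\,dt$.

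The first routine step is to check restriction. A strong solution on $[0,T]$ restricts to a strong solution on $[0,T_1]$, as noted in Remark \ref{r:timeei}. A subsolution on $[0,T]$ also restricts to a subsolution on $[0,T_1]$: any pair $(E,B)$ on $[0,T_1]\times\Omega$ satisfying \eqref{e:constrweak} there, extended by zero, satisfies \eqref{e:constrweak} on $[0,T]$, because test functions restrict. I would verify this with a short computation, noting that the constraint \eqref{e:constrweak} imposes no condition at the final time.

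The main step, and the main obstacle, is choosing the weight. Remark \ref{r:timeei} gives $\gamma>0$ such that $\wei(t)=e^{-\gamma t}$ on $[0,T_1]$ satisfies \eqref{e:pd++}. However, this weight alone does not give a strict inequality, since $\tilde K\le K_0$ holds only weakly on $[0,t_0]$ and strictly only on $(t_0,T_1]$. Still, $\int_0^{T_1}\wei(K_0-\tilde K)\,dt>0$ for any positive $\wei$, because the integrand is $\ge0$ everywhere and $>0$ on the set $(t_0,T_1)$ of positive length; here I use that $\tilde K$ is continuous, since $M$ is continuous. So strictness comes for free, and the real issue is only whether \eqref{e:pd++} holds on $[0,T_1]$. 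Remark \ref{r:timeei} supplies exactly this for every $T_1<T$, via \eqref{e:amvmcc}. Thus any $T_1\in(t_0,t_1)$ works; note $T_1<t_1\le T$.

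Combining the two items yields $\Wei(0)K_0=\tilde{\mathcal I}(v_0,T_1)\le\int_0^{T_1}\wei\tilde K\,dt<\Wei(0)K_0$, which is a contradiction. The last assertion of the theorem is the special case $t_0=0$, $t_1=\epsilon$, where the condition on $[0,t_0]$ reduces to $\tilde K(0)\le K(0)$, and this follows by continuity. The points I expect to need care are the extension-by-zero argument for restricting subsolutions and the measure-theoretic justification that \eqref{e:weiwei} on $[0,T_1]$ combined with $\gamma\Wei\le\wei$ yields \eqref{e:pd++}.
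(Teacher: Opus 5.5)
Your proof is correct, and it takes a genuinely simpler route than the paper's. The paper fixes the horizon $T_1\in(t_1,T)$, beyond the interval where dissipation is assumed. On $(t_1,T_1)$ the subsolution's entropy $\tilde K$ may then exceed $K_0$. To handle this, the paper takes $\gamma$ larger than \eqref{e:weiwei} alone requires and picks $t_2\in(t_0,t_1)$. It then uses the decay of $\wei(t)=e^{-\gamma t}$ so that the deficit on $(t_0,t_2)$, weighted by at least $e^{-\gamma t_2}$, dominates any excess on $(t_1,T_1)$, weighted by at most $e^{-\gamma t_1}$.

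You instead take $T_1\in(t_0,t_1)$. Then $K_0-\tilde K\ge 0$ on all of $[0,T_1]$, with strict inequality on $(t_0,T_1]$, so the strict inequality of weighted integrals holds for any positive weight. The only remaining role of $\gamma$ is to secure \eqref{e:pd++} through Remark \ref{r:timeei}, which is available for every $T_1<T$. This removes the extra large-$\gamma$ estimate entirely.

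Both arguments depend equally on restricting the strong solution and the subsolution to $[0,T_1]$. The paper leaves the subsolution restriction implicit when it compares $\int_0^{T_1}\wei\tilde K\,dt$ with $\tilde{\mathcal I}(v_0,T_1)$. Your extension-by-zero check supplies it correctly, since test functions on $[0,T]$ restrict to admissible test functions on $[0,T_1]$.

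The paper's version does buy a slightly stronger, selection-type conclusion. Even on a fixed longer horizon, during which the competitor may later carry more entropy, minimizing the weighted total entropy with a sufficiently front-loaded weight still excludes a competitor that dissipates earlier. This is the sense in which weighted entropy minimization implements Dafermos' criterion. For the theorem as stated, however, your shorter horizon suffices.
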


	\begin{proof} The proof is similar to that of \cite[Theorem 4.3]{V25}, but we include it for completeness. We prove the first claim; the second is obtained by setting $t_0=0$ and $t_1=\epsilon$. 
		
		Assume that the  subsolution $(u,M)$ satisfies $\tilde K(t)\le K(t)$ for $t\in [0,t_0]$ and $\tilde K(t)<K(t)$ for $t\in(t_0,t_1)$ for some $0\leq t_0<t_1\leq T$. W.l.o.g. $t_1<T$. Fix any $T_1\in (t_1,T)$. Let $\gamma$ be a sufficiently large number that satisfies \eqref{e:weiwei} on $[0,T_1]\times \Omega$ and other lower bounds to be determined below. As in Remark \ref{r:timeei}, we set $\wei(t):=\exp(-\gamma t)$ and $\Wei(t):=\int_t^{T_1} \wei(s)\, ds$, and infer that \eqref{e:pd++} holds on $[0,T_1]\times \Omega$.   By Theorem \ref{t:smooth}, $\tilde{\mathcal I}(v_0,T_1)=\Wei(0)K_0$. Thus in order to get a contradiction we just need to prove that \be \label{dgoal} \int_0^{T_1} \wei(t) \tilde K(t)\, dt< \Wei(0)K_0.\ee
		
		Since $v$ is a strong solution, its total entropy $K(t)\equiv K_0$, so \eqref{dgoal} is equivalent to $$ \int_0^{T_1} \wei(t) (\tilde K(t)-K(t))\, dt< 0.$$ Since $\tilde K(t)\le K(t)$ for $t\in [0,t_0]$, it is enough to prove \be \label{dgoal2} \int_{t_0}^{T_1} \wei(t) (\tilde K(t)-K(t))\, dt< 0.\ee 
	Let $$\epsilon:=\int_{t_1}^{T_1} \wei(t) (\tilde K(t)-K(t))\, dt.$$ 
	Then our claim \eqref{dgoal2} becomes \be \label{dgoal3} \int_{t_0}^{t_1} \wei(t) (\tilde K(t)-K(t))\, dt< -\epsilon.\ee 
If $\epsilon\leq 0$, \eqref{dgoal3} is obvious, so let us assume $\epsilon>0$. Observe now that $$\epsilon\leq \wei(t_1) \int_{0}^T  |\tilde K(t)-K(t)|\, dt\leq C_\epsilon \wei(t_1),$$ where $C_\epsilon$ does not depend on $\gamma$. Fix any point $t_2\in (t_0,t_1)$. Let $\gamma$ be so large that $$\exp(\gamma(t_1-t_2))\int_{t_0}^{t_2} ( K(t)-\tilde K(t))\, dt\ge C_\epsilon.$$  Then we conclude that \begin{multline*} \int_{t_0}^{t_1} \wei(t) (\tilde K(t)-K(t))\, dt<  \int_{t_0}^{t_2} \wei(t) (\tilde K(t)-K(t))\, dt\\ \leq \wei(t_2) \int_{t_0}^{t_2}  (\tilde K(t)-K(t))\, dt=\exp(-\gamma t_2) \int_{t_0}^{t_2}  (\tilde K(t)-K(t))\, dt \\ \leq -\exp(-\gamma t_1)C_\epsilon\leq -\epsilon.\end{multline*}
		\end{proof}

\section{Existence and absence of a duality gap} \label {s:exi}

In this section, we establish\footnote{See, however, Open Problem \ref{op1}.} the existence of solutions to the dual problem, which may be called \emph{variational dual} solutions to \eqref{e:aeuler}–\eqref{e:aeulero} (cf. \cite{Ach5,Ach7,AV25}), and demonstrate that no duality gap arises between $\tilde{\mathcal I}(v_0, T)$ and $\tilde{\mathcal J}(v_0, T)$. Note that here we obtain only a restricted form of absence of a duality gap: unlike Theorem \ref{t:smooth} and Remark \ref{nodualitygap}, we cannot claim equality between $\mathcal I(v_0, T)$ and $\tilde{\mathcal I}(v_0, T)$.

\begin{theorem}[Existence and absence of a duality gap] \label{t:exweak}  Assume that \be \label{e:plo} L(I)=0.\ee 
	Then for any $v_0\in  C(\Omega;\ooo)\cap\ker \lc$ there exists a maximizer $$(E,B)\in  \mathcal M([0,T]\times \Omega;\R^n)\times \mathcal M([0,T]\times \Omega;\R^{\nn}_s)$$ of \eqref{e:conc}, and there is no duality gap in the sense that \be \label{ndg} 0\leq \tilde{\mathcal I}(v_0, T)=\tilde{\mathcal J}(v_0, T)<+\infty.\ee \end{theorem}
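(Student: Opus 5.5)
Our plan is to derive both the existence of a maximizer and the absence of a duality gap from one Fenchel--Rockafellar duality argument on the Banach space $Y:=C([0,T]\times\Omega;\R^n)\times C([0,T]\times\Omega;\R^{\nn}_s)$, whose dual is the space of pairs of measures in \eqref{e:be}. We view $\tilde{\mathcal J}$ in the form \eqref{e:conc}. The primal side is encoded by two convex functionals on $Y$. The first is
\[
\Phi(z,M):=\frac12\la M,\wei I\mut\ra \ \text{ if } \cf(z)\le M,\qquad \Phi(z,M):=+\infty \ \text{ otherwise}.
\]
The second, $\Theta(z,M)$, equals $0$ if $(z+v_0,M)$ lies in the annihilator of the constraint set \eqref{e:constrweak}, i.e.\ if \eqref{e:w2sub} holds with $v=z+v_0$, and $+\infty$ otherwise. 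With these definitions, $\tilde{\mathcal I}(v_0,T)=\inf_Y(\Phi+\Theta)$ after the shift $v=z+v_0$.

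Next we compute conjugates. The set $\mathcal C$ of pairs $(E,B)$ obeying \eqref{e:constrweak} is a weak-$*$ closed subspace, because it is defined by testing against smooth functions. Hence $\Theta^*(E,B)=-\la v_0,E\ra$ on the annihilator structure, and more precisely $\Theta^*$ is the indicator of $\mathcal C$ shifted by the linear term coming from $v_0$. Directly from \eqref{e:defk1} we get $\Phi^*(-E,-B)=-\bigk$. The Fenchel--Rockafellar theorem then gives
\[
\inf_Y(\Phi+\Theta)=\max_{(E,B)}\bigl[-\Phi^*(-E,-B)-\Theta^*(E,B)\bigr]=\max_{\mathcal C}\bigl[-\la v_0,E\ra+\bigk\bigr],
\]
with the maximum attained, provided some qualification condition holds. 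This yields \eqref{ndg} together with the maximizer. The bound $\tilde{\mathcal J}\ge0$ is already in Remark \ref{rweak}.

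The key step, and the main obstacle, is verifying that qualification condition: we need a point where one functional is finite and the other is continuous. $\Theta$ is an indicator of an affine subspace, so it is nowhere continuous. $\Phi$, however, is continuous at any $(z,M)$ with $z\in\ooo$ and $M>\cf(z)$ strictly, uniformly on the compact set $[0,T]\times\Omega$, since $\cf$ is continuous on $\ooo$. We therefore need a subsolution of exactly this type, and here \eqref{e:plo} enters. We take the stationary candidate $v\equiv v_0$ with $M=\lambda(t) I$, and we want \eqref{e:w2sub} to hold. With $v=v_0$, the condition reduces to $\la \lambda I,B\ra=0$ for all $(E,B)\in\mathcal C$.

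Formally, $v\equiv v_0$, $M=\lambda I$ solves $\p_t v=L(M)$, because $L(\lambda(t)I)=\lambda(t)L(I)=0$, and $\lc v_0=0$. To check this rigorously we would test \eqref{e:constrweak} with $\Psi(t,x)=\Lambda(t)I$, where $\Lambda(t)=\int_0^t\lambda(s)\,ds$. This choice satisfies $\Psi(0)=0$, $L\Psi=0$ and $\lc L\Psi=0$, so \eqref{e:constrweak} gives $\la \lambda I,B\ra=0$, which is what we need. Choosing $\lambda$ to be a large constant, with $\lambda I>\cf(v_0)$ uniformly (possible since $v_0$ is continuous with values in $\ooo$, hence compactly valued there), gives an interior point of $\dom\Phi$ at which $\Theta=0$.

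Finiteness $\tilde{\mathcal I}<\infty$ follows from the same point. The remaining technicality is the sign convention relating $\Theta^*$ to the constraint class, which we will handle carefully when computing the conjugates.
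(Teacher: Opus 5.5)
Your proposal is correct and follows the paper's proof essentially step for step. It applies Fenchel--Rockafellar on $C([0,T]\times\Omega;\R^n)\times C([0,T]\times\Omega;\R^{\nn}_s)$ to the constrained weighted-trace functional plus the constraint indicator, validates the qualification point $(v_0,\lambda I)$ by testing \eqref{e:constrweak} with $\Psi=\Lambda(t)I$ and \eqref{e:plo}, and computes the same conjugates, namely $-\bigk$ and the indicator of the constraint class plus the $v_0$-term. The only blemish is the shift bookkeeping: taken literally, ``$(z+v_0,M)$ in the annihilator'' together with $\Theta^*(E,B)=-\la v_0,E\ra$ produces $+\la v_0,E\ra$ in the dual objective, and the clean fix is the paper's unshifted choice $\sup_{(E,B)\,:\,\eqref{e:constrweak}}\bigl[\la z-v_0,E\ra+\la M,B\ra\bigr]$, whose conjugate is $\la v_0,E\ra$ on the constraint class (your weak-$*$ closedness observation is exactly what justifies this).
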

\begin{proof} 
	Let $\mathcal Z:=C([0,T]\times \Omega;\R^n)\times C ([0,T]\times \Omega;\R^{\nn}_s)$. Consider the following two \emph{convex} functionals on ${\mathcal Z}$:
$$\Theta(z,M)= \left\{\begin{array}{l} 
	\frac 1 2 \la M,\wei I\mut \ra \quad \mathrm{if}\ M\ge \cf(z)\ \mathrm{pointwise},\\
	+\infty\quad \mathrm{otherwise},
\end{array} \right.$$
$$\Xi(z,M)=\sup_{E,B:\,\eqref{e:constrweak}}\la z-v_0,E \ra +\la M, B \ra.$$ Indeed, $\Xi$ is convex as the supremum of affine functions. Furthermore, the convexity of $\dom\Theta$ follows from the L\"owner convexity of $\cf$; this immediately yields the convexity of $\Theta$.  

Then it is easy to see that $$\tilde{\mathcal I}(v_0, T)=\inf_{(z,M)\in {\mathcal Z}} \Theta(z,M)+\Xi(z,M).$$ 
In order to apply the Fenchel-Rockafellar duality theorem \cite[Theorem 1.9]{villani03topics}, we need to show that \begin{equation}\label{dualfr}\tilde{\mathcal J}(v_0, T)=\sup_{(E,B)\in {\mathcal Z}^*} -\Theta^*(-E,-B)-\Xi^*(E,B)\end{equation} and that there exists a point $(z_0,M_0)\in {\mathcal Z}$ such that $\Theta, \Xi$ are finite at $(z_0,M_0)$ and $\Theta$ is continuous at that point. 

Firstly, such a point is easy to find, it suffices to take $z_0(t,x)\equiv v_0(x)$ and $M_0(t,x) \equiv CI$ with $C$ being a constant strictly larger than the maximum in $x\in \Omega$ of the largest eigenvalue of $\cf(v_0(x))$. 

Indeed, observe first that $\Psi:=tI$ satisfies $L \Psi=0$, $\Psi(0)= 0$, and, in particular, is an admissible test function in  \eqref{e:constrweak}. Hence, for any pair of measures $(E,B)$ satisfying \eqref{e:constrweak},  we have $\la I,B\ra=0$, whence $\Xi(z_0,M_0)=\sup_{E,B:\,\eqref{e:constrweak}}\la CI,B\ra=0\in \mathbb R$. Moreover, by construction $$M_0(t,x)> \cf(z_0(t,x))$$ in the sense of symmetric matrices, and this inequality remains valid if we slightly perturb $(z_0,M_0)$ in the uniform topology (indeed,  since the image of $v_0$ is compact in $\ooo$ and $\cf$ is differentiable in $\ooo$, the map $v\mapsto \cf(v)$ is continuous at $z_0$ with respect to the uniform topology). Hence, $\Theta(z,M)=\frac 1 2 \la M,\wei I\mut \ra$ in a neighborhood of $(z_0,M_0)$, whence $\Theta$ is finite and continuous at $(z_0,M_0)$.

We now compute the Legendre transforms.  We first compute \begin{multline*}\Theta^*(-E,-B)=\sup_{(z,M)\in \mathcal Z}  -[\la z,E\ra+\la M,B\ra ]-\Theta(z,M)\\=\sup_{M\ge \cf(z)} -[\la z,E \ra+\frac 1 2 \la M,\wei I \mut+2B\ra]=-\mathcal K(E,B). \end{multline*}
We now claim that $\Xi^*(E,B)=\Upsilon(E,B)$, where $\Upsilon(E,B)$ is defined to be $\la v_0,E \ra $ if the pair of measures $(E,B)$ satisfies \eqref{e:constrweak} and is equal to $+\infty$ otherwise.  Indeed, $\Upsilon$ is convex and lower-semicontinuous, so it suffices to observe that $\Upsilon^*=\Xi$, which is an immediate consequence of the definition of the Legendre transform.

These computations imply that \eqref{dualfr} holds. By the Fenchel-Rockafellar duality, the $\sup$ in \eqref{dualfr} is achieved (in other words, \eqref{e:conc} has a maximizer) and $\tilde{\mathcal J}(v_0, T)=\tilde{\mathcal I}(v_0, T)<+\infty$. Finally, we have observed in Remark \ref{rweak} that $\tilde{\mathcal J}(v_0, T)\ge 0$.
\end{proof}

\begin{remark}\label{plusin} Assumption \eqref{e:plo} is essential since it prevents the blow-up $\tilde{\mathcal J} (v_0, T)=+\infty$, cf. \cite[Example 1]{V22}. On the other hand, \eqref{e:plo} is a very mild assumption. For example, it holds for any differential operator $L = L(M)$ without zero-order terms, and even for operators with zero-order terms depending only on the off-diagonal entries of $M$. Both options occur in our examples in Section~\ref{Sec3}.

\end{remark}

\begin{remark} Since neither the definitions of weak solutions and subsolutions nor the proof above use the conservativity assumption \eqref{e:acons}, Theorem \ref{t:exweak} is valid without imposing \eqref{e:acons}.\end{remark}

\section{Applications to compressible fluid models} \label{Sec3}

In this section, we verify the assumptions of Section \ref{setti} and the technical condition \eqref{e:plo} for the three compressible fluid models under consideration, arranged in order of increasing complexity. Consequently, all the results above --- including the existence of variational dual solutions and the Dafermos principle --- apply to these models.

\subsection{Compressible barotropic fluids} \label{s:comp} The system of  equations of motion of compressible inviscid barotropic fluids (also known as the isentropic Euler system) reads

\begin{gather} \label{cbf1} \p_t q+\di (\frac {q\otimes q}{\rho})+\nabla (P(\rho))=0,\\ \label{cbf2} 
	\p_t \rho+\di q=0,\\ \label{cbf3} 
	q(0)=q_0,\quad \rho(0)=\rho_0.\end{gather} The unknowns are $(q,\rho):[0,T]\times \Omega\to \R^d\times \R_+$, representing the mass flux and the density, respectively.
	
	\begin{assumption} \label{assp} We assume that the function $P:[0,+\infty)\to \R$ is continuous and convex, and that it is given by $$P(0)=0,\quad P(y)=U'(y)y-U(y)+U(0),\quad y> 0,$$ where $U:[0,+\infty)\to \R$ is a given continuous convex energy function such that for $y>0$ the function $U$ is smooth and $U''$ is strictly positive. In particular, this implies $P\ge 0$. We further assume that the function $2U(y)-dP(y)$ is  convex. Typical examples are \be U(y)=y^{\gamma},\quad 1<\gamma\leq 1+\frac 2 d,\ee and \be \label{ulogu} U(y)=y\log y, \ee in any dimension $d$. These examples correspond to classical physically relevant models of compressible fluids (see, for instance, \cite{CVC21,CMP18}).
		\end{assumption}

Observe that since $2U(y)-dP(y)$ is convex, there exists an affine function $l(y)$ such that $2U(y)+l(y)-dP(y)-y\geq 0$ for $y\geq 0$. Since the values of the function $P$ and the convexity properties of $U$ are not affected by adding an affine function to $U$, we may assume without loss of generality that \be 2U(y)-dP(y)-y\geq 0. \ee

\begin{remark} \label{brenres} In the pioneering work \cite{CMP18}, Brenier included the compressible Euler system \eqref{cbf1}–\eqref{cbf3} in his analysis, with particular emphasis on the energy \eqref{ulogu}. However, the only rigorous result currently available regarding the dual formulation of this system concerns consistency over small time intervals \cite[Theorem 3.1]{CMP18}.\end{remark}

In order to rewrite the problem in the abstract form \eqref{e:aeuler}--\eqref{e:aeulero}, we let $$n=N=d+1,\ Z=0,\ v=(q,\rho),\ v_0=(q_0,\rho_0), \ \lc\equiv 0,$$ 
$$\dom \cf={[\rho>0]\cup {(0,0)}},$$ $$\cf(v)=\frac {v\otimes v} \rho+\sum_{i=1}^{d} P(\rho)e_i\otimes e_i+[2U(\rho)-dP(\rho)-\rho]e_{N}\otimes e_{N}\ \textrm{if}\ \rho>0,\quad \cf(0,0)=2U(0)e_{N}\otimes e_{N}.$$
Note that the Löwner convexity and positive semidefiniteness of the first term are well known, and the Löwner convexity and positive semidefiniteness of the remaining rank-$1$ terms follow from the convexity and nonnegativity of $P$ 
and $2U(\rho)-dP(\rho)-\rho$. 

Consequently, $$\ce(v)=\ce(q,\rho)=\frac {|q|^2}{2\rho}+U(\rho)\ \textrm{if}\ \rho>0,\quad \ce(0,0)=U(0),$$
$$\ooo=[\rho>0],$$
 $$\mom=(u,\zeta)=\left(\frac q \rho, -\frac {|q|^2}{2\rho^2}+U'(\rho)\right).$$ From the perspective of physics, $u$ is the velocity of the fluid. It is clear that $\ce$ is strictly convex in $\ooo$. Hence, $v\mapsto \mom$ is injective, and the inverse is determined by the relations $$v(u,\zeta)=(u\rho,\rho),\ \rho=(U')^{-1}(\zeta+\frac 1 2|u|^2).$$ We also set $$L: D(L)\subset X_s^{\nn}\to X^n, \quad L\,\left(\begin{array}{@{}c|c@{}}
 	\Xi& \eta \\  \hline 
 	\eta^\top & \kappa
 \end{array}\right)=\left(\begin{array}{@{}c@{}}
 	-\di \Xi \\ \hline
 	-\di \eta
 \end{array}\right).$$ Then it is straightforward to compute\footnote{Hereafter, we omit $dx$ and $\mut$ in the context of dual operators to avoid heavy notation.} that $$L^*\left(\begin{array}{@{}c@{}}
\xi\\ \hline
\theta
\end{array}\right)=\frac 1 2 \,\left(\begin{array}{@{}c|c@{}}
\nabla \xi+(\nabla \xi)^\top& \nabla \theta \\  \hline 
 (\nabla \theta)^\top & 0
\end{array}\right).$$  It is easy to check that 
 \eqref{cbf1}-\eqref{cbf3} is equivalent to a particular instance of \eqref{e:aeuler}--\eqref{e:aeulero}.  Obviously, condition \eqref{e:plo} holds and $\lc$ has closed range. Moreover, the {conservativity} condition \eqref{e:acons} becomes \be\label{e:consbar}\int_\Omega \left[\frac {v\otimes v} \rho+\sum_{i=1}^{d} P(\rho)e_i\otimes e_i+[2U(\rho)-dP(\rho)-\rho]e_{N}\otimes e_{N}\right]:\left(\begin{array}{@{}c|c@{}}
 	\nabla u&  \nabla \zeta \\  \hline 
 	0 & 0
 \end{array}\right)\,dx\ee for smooth $(q,\rho): \Omega \to \ooo$.
This is equivalent to
\be\label{e:consbar1}\int_\Omega \frac {q\otimes q} \rho:\nabla u +P(\rho)\di u\,dx+q\cdot \nabla \zeta\,dx=0\ee
and hence to \be\label{e:consbar2}\int_\Omega \rho u\otimes u:\nabla u -u \cdot P'(\rho)\nabla \rho \,dx+\rho u\cdot \nabla \left(-\frac 1 2 |u|^2+U'(\rho)\right) \,dx=0,\ee which is true because $U''(\rho)\rho=P'(\rho)$. Consequently, all our assumptions are satisfied for this model, and the results above are fully applicable. In particular, the existence result and the Dafermos principle read 
\begin{corollary} \label{exbar} For any $(q_0,\rho_0)\in  C(\Omega;\R^{d+1})$, $\rho_0>0$, there exists a maximizer $$(E,B)\in  \mathcal M([0,T]\times \Omega;\R^{d+1})\times \mathcal M([0,T]\times \Omega;\R^{(d+1)\times (d+1)}_s)$$ of the dual problem \eqref{e:conc} for \eqref{cbf1}--\eqref{cbf3}, and there is no duality gap in the sense of \eqref{ndg}. \end{corollary}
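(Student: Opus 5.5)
The plan is to obtain Corollary \ref{exbar} as a direct specialization of Theorem \ref{t:exweak}. Concretely, I will check that the barotropic Euler system \eqref{cbf1}--\eqref{cbf3}, encoded through the data $n=N=d+1$, $Z=0$, $\lc\equiv 0$, the matrix function $\cf$ and the operator $L$ introduced above, satisfies every hypothesis of the abstract setting. I will also check that the initial datum $(q_0,\rho_0)$ falls into the admissible class $C(\Omega;\ooo)\cap\ker\lc$.

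The key steps are as follows.

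\emph{Setting.} First I verify the structural items of the Setting.
\begin{itemize}
\item $\dom\cf=[\rho>0]\cup\{(0,0)\}$ is convex with nonempty interior $\ooo=[\rho>0]$.
\item $\cf$ is $C^1$ in $\ooo$, since $P$, $U$ are smooth for $\rho>0$.
\item $\cf$ is Löwner convex with values in $\Mm$. The term $\frac{v\otimes v}{\rho}$ is the classical perspective-type Löwner convex map (including the limit at $(0,0)$, consistent with lower semicontinuity). The rank-one terms $P(\rho)e_i\otimes e_i$ and $[2U-dP-\rho]e_N\otimes e_N$ are convex and nonnegative by Assumption \ref{assp} and the normalization $2U-dP-\rho\ge0$.
\item $L$ is a closed densely defined first-order differential operator.
\item $\lc\equiv0$ trivially has closed range.
\end{itemize}

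\emph{Entropy.} Next I confirm Assumption \ref{asco}. On $\ooo$, $\ce(q,\rho)=\frac{|q|^2}{2\rho}+U(\rho)$ is strictly convex, since $U''>0$ and $|q|^2/\rho$ is jointly convex and strictly convex transversally to rays. The extension by $U(0)$ at the origin and by $+\infty$ outside $\dom\cf$ is lower semicontinuous, because $|q|^2/(2\rho)\ge 0$ and $U$ is continuous at $0$.

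\emph{Condition \eqref{e:plo}.} Then I check $L(I)=0$. The identity block $\Xi=I_d$ has zero divergence and $\eta=0$, so $L(I)=(-\di I_d,-\di 0)=0$, as already remarked.

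\emph{Initial datum.} Finally, $(q_0,\rho_0)\in C(\Omega;\R^{d+1})$ with $\rho_0>0$ means exactly $v_0\in C(\Omega;\ooo)$, and $\ker\lc$ is everything. Theorem \ref{t:exweak} then yields the maximizer $(E,B)$ of \eqref{e:conc} in the stated measure spaces, together with \eqref{ndg}.

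The only mildly delicate point is the Löwner convexity and lower semicontinuity of $\cf$ and $\ce$ at the boundary point $(0,0)$ of the domain, where $\cf(0,0)=2U(0)e_N\otimes e_N$. I would handle it by writing $\cf$ as a sum of terms, checking convexity of $y\mapsto \cf(y):P$ for each $P\ge0$ along segments ending at the origin, and using $P(0)=0$ and the continuity of $U$ and $P$ at $0$.

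The conservativity \eqref{e:acons}, verified above via $U''(\rho)\rho=P'(\rho)$, is not needed here, since Theorem \ref{t:exweak} does not use it.
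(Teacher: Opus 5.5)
Your proposal is correct and follows the paper's own route. The paper also proves the corollary by checking, for $n=N=d+1$, $Z=0$, $\lc\equiv 0$, that $\cf$ is L\"owner convex and positive semidefinite (perspective term plus rank-one terms), that $\ce$ is strictly convex on $\ooo$, that $\lc$ has closed range and $L(I)=0$, and then applying Theorem \ref{t:exweak} with $v_0\in C(\Omega;\ooo)$; your observation that \eqref{e:acons} is not needed matches the paper's remark after that theorem.
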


\begin{corollary} \label{cordaf} Let $(q_0,\rho_0)\in  C(\Omega;\R^{d+1})$, $\rho_0>0$. Let $(q,\rho)$ be a strong solution to \eqref{cbf1}--\eqref{cbf3} with total entropy $$K(t):=\int_\Omega \left( \frac {|q(t,x)|^2}{2\rho(t,x)}+U(\rho(t,x))\right)\,d x,$$ and $(\tilde q,\tilde \rho, M)$ a subsolution with total entropy \eqref{tess}. Then, for any $0\leq t_0<t_1\leq T$, it cannot simultaneously be that $\tilde K(t)\le K(t)$ for $t\in [0,t_0]$ and $\tilde K(t)<K(t)$ for $t\in(t_0,t_1)$. In particular, it is impossible that $\tilde K(t)<K(t)$ for $t\in(0,\epsilon)$, $\epsilon>0$. \end{corollary}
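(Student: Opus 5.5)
Corollary \ref{cordaf} is a direct specialization of Theorem \ref{dpr1} to the barotropic Euler system, so the plan is to check that its hypotheses hold in the setting fixed above. Take $n=N=d+1$, $Z=0$, $\lc\equiv 0$, $v=(q,\rho)$, with $\cf$ and $L$ as above. Assumption \ref{asco} and the requirements of the Setting have already been verified in the text. Specifically, $\cf$ is Löwner convex and positive semidefinite because $P\ge 0$ is convex and $2U-dP-\rho\ge 0$ is convex. $\dom\cf$ is convex with interior $\ooo=[\rho>0]$. $\ce$ is strictly convex on $\ooo$, and it is lower semicontinuous at $(0,0)$ since $|q|^2/(2\rho)\ge 0$ and $U$ is continuous up to $0$. $L$ is closed and densely defined, and $\lc\equiv 0$ trivially has closed range. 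Assumption \ref{a:consc} reduces to \eqref{e:consbar2}, which follows from $U''(\rho)\rho=P'(\rho)$ by integrating by parts on $\mathbb T^d$.

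Next I would match the data. Since $\rho_0>0$ is continuous on the compact torus, $v_0=(q_0,\rho_0)\in C(\Omega;\ooo)$, and $\ker\lc$ is the whole space. Definition \ref{d:strongclass} is stated abstractly, so a strong solution $(q,\rho)$ of \eqref{cbf1}--\eqref{cbf3} is by definition a strong solution of \eqref{e:aeuler}--\eqref{e:aeulero} in this instance. One must also check that the abstract total entropy \eqref{e:consp1} equals $\int_\Omega\big(\frac{|q|^2}{2\rho}+U(\rho)\big)dx$. This amounts to computing $\frac12\tr\cf(v)$. The first block contributes $\frac{|q|^2+\rho^2}{2\rho}$, the diagonal block contributes $\frac d2 P(\rho)$, and the last entry contributes $\frac12\big(2U(\rho)-dP(\rho)-\rho\big)$. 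The sum is $\frac{|q|^2}{2\rho}+U(\rho)$, as required.

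This trace check is the one place needing care. Because $v\otimes v$ includes the $\rho^2$ entry, it produces a stray $\rho/2$, and that term cancels only because of the normalization $2U-dP-\rho\ge0$ adopted above. That normalization shifts $U$ by an affine function. This does not affect the comparison in the corollary, since the shift adds $\int(a\rho+b)\,dx$ equally to $K$ and $\tilde K$. For the subsolution, $\tilde K$ is defined directly by \eqref{tess}, and $\int\rho\,dx$ is conserved by mass conservation. Testing \eqref{e:constrweak} with suitable pairs gives this for subsolutions too, but it is not needed here, since the statement uses $\tilde K$ as defined.

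Finally, subsolutions $(\tilde q,\tilde\rho,M)$ are exactly the abstract subsolutions. Theorem \ref{dpr1} then applies verbatim and gives both claims, the second by taking $t_0=0$ and $t_1=\epsilon$. No step is a genuine obstacle; the only nontrivial bookkeeping is the identification of the entropy described in the previous paragraph.
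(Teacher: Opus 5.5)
Your proposal is correct and follows the paper's own route. The paper likewise verifies that the barotropic Euler system fits the abstract framework — Löwner convexity, strict convexity and lower semicontinuity of $\ce$, and conservativity via $U''(\rho)\rho=P'(\rho)$ — and then applies Theorem \ref{dpr1} verbatim; your trace check $\frac12\tr\cf(v)=\frac{|q|^2}{2\rho}+U(\rho)$ is exactly its identification of $\ce$. The digression about undoing the affine normalization of $U$ is unnecessary, since the corollary is stated for the normalized $U$.
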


\begin{remark} This ``Dafermos principle'' pertains to strong solutions $(q,\rho)$, whereas the adversaries  $(\tilde q,\tilde \rho, M)$ are subsolutions in a ``weak'' sense, so it does not contradict the results of \cite{CK14}.\end{remark}

\begin{remark} \label{sharpb}
In order to obtain the ``sharp'' formulation, we compute $$\p_l\cf(q,\rho)=\left(\begin{array}{@{}c|c@{}}
	u\otimes e_l+e_l\otimes u& e_l \\  \hline 
	e_l^\top & 0
\end{array}\right),\ l=1,\dots,d,$$
$$\p_N\cf(q,\rho)=\left(\begin{array}{@{}c|c@{}}
	-u\otimes u+P'(\rho)I& 0 \\  \hline 
	0 & 2U'(\rho)-dP'(\rho)
\end{array}\right).$$ Hence  \eqref{e:aeuler2} becomes \begin{gather} \label{e:first} \p_t u_l+ \left(\begin{array}{@{}c|c@{}}
		\nabla u& \nabla \zeta \\  \hline 
		0 & 0
	\end{array}\right):\left(\begin{array}{@{}c|c@{}}
	u\otimes e_l+e_l\otimes u& e_l \\  \hline 
	e_l^\top & 0
\end{array}\right)=0,\ l=1,\dots,d,\\
\p_t \zeta-\nabla u:u\otimes u+P'(\rho)\di u=0, \label{e:second}\\ \rho=(U')^{-1}(\zeta+\frac 1 2|u|^2). \label{third}\end{gather}  
Equation \eqref{e:first} equivalently reads
\be\p_t u+ (u\cdot\nabla)u+\frac 1 2 \nabla ( |u|^2)+\nabla \zeta=0.\label{fourth}\ee

Thus, \eqref{e:second}--\eqref{fourth} constitutes the  ``sharp'' formulation of the isentropic Euler system. 
To see that it is (at least formally) equivalent to \eqref{cbf1}--\eqref{cbf2}, we observe that \eqref{fourth} can be recast as
\be \label{e:rewr1} \p_t u+ (u\cdot\nabla)u+\nabla \left(U'(\rho)\right)=0.\ee Furthermore, since $\p_t \zeta=U''(\rho)\p_t\rho-u\p_t u$, \eqref{e:second} rewrites as \be
	U''(\rho)\p_t\rho+u \left( (u\cdot\nabla)u+\nabla \left(U'(\rho)\right)\right)-\nabla u:u\otimes u+P'(\rho)\di u=0.\label{e:rewr2} \ee
Since $U''(\rho)>0$ for $\rho>0$ and $P'(\rho)=U''(\rho)\rho$, a direct calculation shows that \eqref{e:rewr2} is equivalent to \eqref{cbf2}. Consequently, \eqref{cbf1}--\eqref{cbf2} is formally equivalent to  \eqref{e:rewr1}--\eqref{e:rewr2}, and hence to  \eqref{e:second}--\eqref{fourth}. {Observe also that the transformed system \eqref{e:second}--\eqref{fourth} is not a symmetric hyperbolic system, in line with Remark \ref{remsharp}.} \end{remark}

\subsection{Quantum fluids} The system of equations of motion of compressible quantum fluids, also known as the QHD system, is

\begin{gather} \label{nls1} \p_t q+\di (\frac {q\otimes q}{\rho})+\nabla (P(\rho))-\rho \nabla (\frac {2 \Delta \sqrt \rho}{\sqrt \rho})=0,\\ \label{nls2} 
	\p_t \rho+\di q=0,\\ \label{nls3} 
	q(0)=q_0,\quad \rho(0)=\rho_0.\end{gather} As before, the unknowns are $(q,\rho):[0,T]\times \Omega\to \R^d\times \R_+$. The function $P$ is assumed to satisfy Assumption \ref{assp}. We do not require that the initial velocity $\frac {q_0} {\rho_0}$ is irrotational. 
	
	The first equation can be written in the divergence form \be  \label{nls4} \p_t q+\di (\frac {q\otimes q}{\rho})+\nabla (P(\rho)-\Delta \rho)+\di \left(\frac {\nabla \rho\otimes \nabla \rho}{\rho} \right)=0.\ee  We define a new variable $\g:=\nabla \rho$ and express the problem in the form
		\begin{gather} \label{nls6} \p_t q+\di (\frac {q\otimes q}{\rho})+\di \left(\frac {\g\otimes \g}{\rho} \right)+\nabla (P(\rho)-\di \g)=0,\\ \label{nls7} 
	\p_t \rho+\di q=0,\\ \label{nls8} 
	\p_t \g +\nabla \di q=0,\\ \label{nls9} 
	q(0)=q_0,\quad \g(0)=\g_0:=\nabla \rho_0,\quad \rho(0)=\rho_0,\end{gather}
	with the linear constraint \be \label{nls99}  \nabla \rho-\g=0.\ee
	
	To cast this problem into the abstract framework \eqref{e:aeuler}--\eqref{e:aeulero}, we let $$n=N=2d+1,\ Z=d,\ v=(q,\g,\rho),\ v_0=(q_0,\g_0,\rho_0),$$ $$\lc (v)= \nabla \rho-\g,$$
	$$\dom \cf={[\rho>0]\cup {(0,0,0)}},$$ $$\cf(v)=\frac {v\otimes v} \rho+\sum_{i=1}^{d} P(\rho)e_i\otimes e_i+[2U(\rho)-dP(\rho)-\rho]e_{N}\otimes e_{N}\ \textrm{if}\ \rho>0,$$ $$ \cf(0,0,0)=2U(0)e_{N}\otimes e_{N},$$ $$L: D(L)\subset X_s^{\nn}\to X^n, \quad L\,\left(\begin{array}{@{}c|c|c@{}}
		\Xi & \Gamma  &  g \\  \hline 
		\Gamma^\top & \Upsilon & r \\ \hline g^\top & r^\top & \kappa 
	\end{array}\right)=\left(\begin{array}{@{}c@{}}
		-\di \Xi -\di \Upsilon+\nabla \di r\\ \hline 
		-\nabla \di g \\ \hline -\di g
	\end{array}\right).$$ Hence,
	$$\ce(v)=\ce(q,\g,\rho)=\frac {|q|^2}{2\rho}+\frac {|\g|^2}{2\rho}+U(\rho)\ \textrm{if}\ \rho>0,$$ $$ \ce(0,0,0)=U(0),$$
	$$\ooo=[\rho>0],$$
	$$\mom=(u,\et,\zeta)=\left(\frac q \rho, \frac \g \rho, -\frac {|q|^2+|\g|^2}{2\rho^2}+U'(\rho)\right).$$ Physically, $u$ corresponds to the velocity of the fluid, and $\et$ to the osmotic velocity.
	
	Note that \eqref{e:plo} holds and $\lc$ has closed range (that coincides with $X^d$). It is also straightforward to check that \begin{gather*} \lc^*(\eta)=-(0,\eta,\di \eta),\\ L^*\left(\begin{array}{@{}c@{}}
	\eta \\ \hline
	\upsilon \\ \hline \theta 
	\end{array}\right)=\frac 1 2 \,\left(\begin{array}{@{}c|c|c@{}}
	\nabla \eta+(\nabla \eta)^\top& 0 & -\nabla \di \upsilon +\nabla \theta  \\  \hline 
	0& \nabla \eta+(\nabla \eta)^\top & \nabla \di \eta \\ \hline (-\nabla \di \upsilon+\nabla \theta)^\top & (\nabla \di \eta)^\top & 0
	\end{array}\right).\end{gather*}

	The {conservativity} condition \eqref{e:acons} becomes \begin{multline} \label{e:consbarq}\int_\Omega \left[\frac {v\otimes v} \rho+\sum_{i=1}^{d} P(\rho)e_i\otimes e_i+[2U(\rho)-dP(\rho)-\rho]e_{N}\otimes e_{N}\right]\\:\left(\begin{array}{@{}c|c|c@{}}
			\nabla u& 0 & -\nabla \di \et +\nabla \zeta  \\  \hline 
			0 & \nabla u &  \nabla \di u \\ \hline 0 & 0 & 0
		\end{array}\right)\,dx=0.\end{multline}
	We omit the proof (for smooth $\rho, q, v, \eta, \zeta$ with $\rho > 0$), as it follows the same line of argument as the more general proof of \eqref{e:consbark} below. Consequently, all of our assumptions are satisfied for this model, and the preceding ``abstract'' results apply in full. In particular, a result similar to Corollary \ref{exbar} and Corollary \ref{excort} holds, as does a result analogous to Corollary \ref{cordaf}.

	\begin{remark} \label{sharpq} To pass to the ``sharp'' formulation, we compute
	$$\p_l\cf(q,\g,\rho)=\left(\begin{array}{@{}c|c|c@{}}
	u\otimes e_l+e_l\otimes u&  \et \otimes e_l & e_l  \\  \hline 
	e_l\otimes \et & 0 & 0 \\ \hline e_l ^\top & 0 & 0
	\end{array}\right),\ l=1,\dots,d,$$ 
	$$\p_{l+d}\cf(q,\g,\rho)=\left(\begin{array}{@{}c|c|c@{}}
	0&  u \otimes e_l  & 0  \\  \hline 
	e_l\otimes u & \et \otimes e_l  + e_l\otimes \et & e_l  \\ \hline 0 & e_l ^\top & 0
	\end{array}\right),\ l=1,\dots , d,$$
	$$\p_N\cf(q,\g,\rho)=\left(\begin{array}{@{}c|c|c@{}}
		-u\otimes u+P'(\rho)I& -u\otimes \et & 0 \\  \hline -\et\otimes u & -\et\otimes \et& 0\\  \hline 
		0 & 0 & 2U'(\rho)-dP'(\rho)
	\end{array}\right).$$

Remember that $\lc^*(\pi)=-(0,\pi,\di \pi)$. Hence the first $d$ components of the ``sharp'' formulation \eqref{e:aeuler2} become \begin{gather} \p_t u_l+ \left(\begin{array}{@{}c|c|c@{}}
		\nabla u & 0 & -\nabla \di \et +\nabla \zeta  \\  \hline 
		0& \nabla u & \nabla \di u \\ \hline 0 & 0 & 0
	\end{array}\right):\left(\begin{array}{@{}c|c|c@{}}
		u\otimes e_l+e_l\otimes u&  \et \otimes e_l & e_l  \\  \hline 
		e_l\otimes \et & 0 & 0 \\ \hline e_l ^\top & 0 & 0
	\end{array}\right)=0,\\ \notag l=1,\dots,d. \end{gather}
	
	 This can be rewritten in the form
	\be \p_t u+ (u\cdot\nabla)u+\frac 1 2 \nabla ( |u|^2) +\nabla \zeta -\nabla \di \et =0. \label{e:seventh}\ee
	
	
	

	The next $d$ components of the ``sharp'' formulation \eqref{e:aeuler2} are
	
	\begin{gather} \p_t \et_{l}+ \left(\begin{array}{@{}c|c|c@{}}
		\nabla u & 0 & -\nabla \di \et +\nabla \zeta  \\  \hline 
		0& \nabla u & \nabla \di u \\ \hline 0 & 0 & 0
	\end{array}\right):\left(\begin{array}{@{}c|c|c@{}}
		0&  u \otimes e_l  & 0  \\  \hline 
		e_l\otimes u & \et \otimes e_l  + e_l\otimes \et & e_l  \\ \hline 0 & e_l ^\top & 0
	\end{array}\right)+\pi_l=0,\\ \notag l=1,\dots,d. \end{gather}
	
	This can be rewritten in the form
	
	\be \p_t \et+ (\nabla u)^\top.\et+(\et\cdot \nabla) u +\nabla \di u+\pi=0.\label{e:eighth}\ee 
	Finally, the last component is  \begin{gather}  \label{e:fifth}\p_t \zeta-\nabla u:u\otimes u-\nabla u:\et\otimes \et+P'(\rho)\di u+\di \pi=0,\\ \rho=(U')^{-1}(\zeta+\frac 1 2|u|^2+\frac 1 2|\et|^2 ).\label{e:sixth}\end{gather} To make this fully consistent with the original QHD system, we note that the linear constraint \eqref{nls99} is equivalent to \be \label{ggl} \nabla \rho=\rho \et.\ee  Thus, \eqref{e:seventh},\eqref{e:eighth}--\eqref{ggl} is the ``sharp'' formulation of the QHD system \eqref{nls1}--\eqref{nls2}. It involves $3d+2$ scalar equations for a $3d+2$-dimensional unknown function $(u,\et,\zeta,\pi,\rho)$. Here $\pi:[0,T]\times \Omega\to \R^d$ is a Lagrange multiplier, playing a role analogous to the pressure in the incompressible Euler and Navier–Stokes equations, and associated with the constraint  \eqref{ggl}. \end{remark}

\subsection{Korteweg fluids} Consider now the
Euler-Korteweg system: 
\begin{gather} \label{kls1} \p_t q+\di (\frac {q\otimes q}{\rho})+\nabla (\tilde P(\rho))-\rho \nabla \left(\kp(\rho)\Delta \rho+\frac 1 2 \kp'(\rho)|\nabla \rho|^2\right)=0,\\ \label{kls2} 
	\p_t \rho+\di q=0,\\ \label{kls3} 
	q(0)=q_0,\quad \rho(0)=\rho_0.\end{gather} Similarly to the previous examples, the pair of unknowns $(q,\rho):[0,T]\times \Omega\to \R^d\times \R_+$ represents the mass flux and the density of a capillary fluid. Following \cite{BGL19}, we restrict ourselves to power-law capillary coefficients $k$ and assume that $$k(\rho)=\frac {(s+3)^2}{4} \rho^s,\quad s> -1,$$ (the case $s=-1$ corresponds to the quantum fluids and is omitted from this discussion since it was already handled above). Set $$\gamex:=\frac {s+3} 2.$$ 
	 The function $\tilde P$ is supposed to admit the decomposition \be \tilde P(\rho)=(\gamex-1)\rho^{2\gamex -1} + P(\rho)\ee with $P$ satisfying Assumption \ref{assp}. For example, $$\tilde P(\rho)=\frac{s+2} 2 y^{s+2},\ -1<s\leq -1+\frac 2 d,$$ fits here (because the corresponding $P(\rho)=\frac 1 2\rho^{s+2}=\frac 1 2\rho^{2\gamex -1}$ satisfies Assumption \ref{assp}). As above, we do not require that the initial velocity $\frac {q_0} {\rho_0}$ is irrotational.

	We now introduce the auxiliary variables $$\zp=\rho^\gamex,\ \g:=\nabla \zp.$$ 
	
	Using these variables and applying the generalized Bohm identity \cite{BGL16,BGL19}, after somewhat tedious but straightforward algebraic manipulations we can rewrite the Euler–Korteweg system in the following augmented form: \begin{gather} \notag \p_t q+\di (\frac {q\otimes q}{\rho})+\nabla \left((\gamex-1)\frac{\zp^2}{\rho} + P(\rho)\right)\\  \label{kls4}
		=\gamex \nabla \di \left(\frac{\zp \g }{\rho}\right) - \di \left(\frac{\g \otimes \g }{\rho}\right)  +(1-\gamex)\nabla \left (\frac{|\g|^2 }{\rho}\right ) , \\ \label{kls6} 
		\p_t \zp+\gamex \di \left (\frac {q \zp}{\rho} \right)+(1-\gamex) \left(\frac {q \cdot \g}{\rho}\right)=0,\\ \label{kls7} 
		\p_t \g+\gamex \nabla \di \left (\frac {q \zp}{\rho} \right)+(1-\gamex) \nabla \left(\frac {q \cdot \g}{\rho}\right)=0, \\ \label{kls8} 
		\p_t \rho+\di q=0, \\ \label{kls9} 
		q(0)=q_0,\quad \zp(0)=\zp_0:=\rho_0^\gamex, \quad \g(0)=\g_0:=\nabla \rho_0^\gamex,\quad \rho(0)=\rho_0,\end{gather}
	with the linear constraint \be \label{kls99}  \nabla \zp-\g=0.\ee
	
	\begin{remark} We deliberately ignore the nonlinear constraint $\zp=\rho^{\gamex}$ because it is already built into the system be means of \eqref{kls6} and \eqref{kls8}.  \end{remark}
	
		We let $$n=N=2d+2,\ Z=d,\ v=(q,\g,\zp,\rho),\ v_0=(q_0,\g_0,\zp_0,\rho_0),$$ $$\lc (v)= \nabla \zp-\g,$$
	$$\dom \cf={[\rho>0]\cup {(0,0,0,0)}},$$ $$\cf(v)=\frac {v\otimes v} \rho+\sum_{i=1}^{d} P(\rho)e_i\otimes e_i+[2U(\rho)-dP(\rho)-\rho]e_{N}\otimes e_{N}\ \textrm{if}\ \rho>0,$$ $$ \cf(0,0,0,0)=2U(0)e_{N}\otimes e_{N}.$$
	$$\ce(v)=\ce(q,\g,\zp,\rho)=\frac {|q|^2}{2\rho}+\frac {|\g|^2}{2\rho}+\frac {\zp^2}{2\rho}+U(\rho)\ \textrm{if}\ \rho>0,$$ $$ \ce(0,0,0,0)=U(0),$$
	$$\ooo=[\rho>0],$$
	$$\mom=(u,\et,\emm,\zeta)=\left(\frac q \rho, \frac \g \rho, \frac \zp \rho, -\frac {|q|^2+|\g|^2+\zp^2}{2\rho^2}+U'(\rho)\right).$$
	Physically, $u$ is the fluid velocity, whereas 
	$\et$ represents the so-called ``drift velocity'' \cite{BGL19}, which has been frequently used in the analysis of the Euler-Korteweg system and was referred to as the ``good'' variable in \cite{Benz,BGL16}.
	
	We also define
	$$L: D(L)\subset X_s^{\nn}\to X^n,$$ $$L\,\left(\begin{array}{@{}c|c|c|c@{}}
		\Xi & \Gamma  &  a &  g \\  \hline 
		\Gamma^\top & \Upsilon &  b & r \\ \hline a & b & \alpha & \beta \\ \hline g^\top & r^\top & \beta & \kappa 
	\end{array}\right)=\left(\begin{array}{@{}c@{}}
		-\di \Xi -\di \Upsilon-(\gamex-1)\nabla \alpha +\gamex \nabla \di b +(1-\gamex)\nabla (\tr \Upsilon)\\ \hline 
		-\gamex \nabla \di a+(\gamex-1)\nabla (\tr \Gamma) \\ \hline -\gamex \di a+ (\gamex-1)\tr \Gamma \\ \hline -\di g
	\end{array}\right).$$ Then \eqref{kls4}--\eqref{kls99} can be represented in the abstract form \eqref{e:aeuler}--\eqref{e:aeulero}. Note that \eqref{e:plo} still holds in spite of the presence of a zero-order term, and $\lc$ has closed range. Moreover, it is easy to see that \begin{multline*}L^*\left(\begin{array}{@{}c@{}}
		\eta \\ \hline
		\upsilon \\ \hline \pi  \\ \hline \theta
	\end{array}\right)\\ =\frac 1 2 \,\left(\begin{array}{@{}c|c|c|c@{}}
		\nabla \eta+(\nabla \eta)^\top& (1-\gamex)I(\di \upsilon-\pi) & -\gamex \nabla \di \upsilon +\gamex \nabla \pi & \nabla \theta \\  \hline 
		(1-\gamex)I(\di \upsilon-\pi) & \nabla \eta+(\nabla \eta)^\top +2(\gamex-1)I\di \eta& \gamex \nabla \di \eta & 0 \\ \hline (-\gamex \nabla \di \upsilon+\gamex \nabla \pi)^\top & \gamex (\nabla \di \eta)^\top & 2(\gamex -1) \di \eta & 0 \\ \hline (\nabla \theta)^\top & 0 & 0 & 0
	\end{array}\right).\end{multline*} The {conservativity} condition \eqref{e:acons} becomes \begin{multline} \label{e:consbark}\int_\Omega \left[\frac {v\otimes v} \rho+\sum_{i=1}^{d} P(\rho)e_i\otimes e_i+[2U(\rho)-dP(\rho)-\rho]e_{N}\otimes e_{N}\right]\\:\left(\begin{array}{@{}c|c|c|c@{}}
	\nabla u& (1-\gamex)I(\di \et-\emm) & -\gamex \nabla \di \et +\gamex \nabla \emm & \nabla \zeta \\  \hline 
	0 & \nabla u +(\gamex-1)I\di u& \gamex \nabla \di u & 0 \\ \hline 0 & 0 & (\gamex -1) \di u & 0 \\ \hline 0 & 0 & 0 & 0
	\end{array}\right)\,dx=0.\end{multline}
This claim is equivalent to
	\begin{multline*}\int_\Omega \frac {q\otimes q} \rho:\nabla u +P(\rho)\di u\,dx+q\cdot \nabla \zeta\,dx \\ + \frac {\g\otimes \g} \rho:(\nabla u+(\gamex-1)I\di u) +(\gamex -1)  \frac {\zp^2} \rho\di u\\+ (1-\gamex)(\di \et-\emm)\frac {q\cdot \g}\rho+(-\gamex \nabla \di \et +\gamex \nabla \emm) \cdot \frac {q\zp}\rho+\gamex \nabla \di u \cdot \frac {\g\zp}\rho=0\end{multline*}
	and hence to \begin{multline*}I_1+I_2+I_3+I_4:=\\ \int_\Omega \rho u\otimes u:\nabla u -u \cdot P'(\rho)\nabla \rho +\rho u\cdot \nabla \left(-\frac 1 2 |u|^2+U'(\rho)\right) \,dx\\+\int_\Omega\left(\rho u\cdot \nabla \left(-\frac 1 2 |\et|^2\right)+\rho \et\otimes \et:\nabla u -\nabla \di \et \cdot \rho \emm u + \nabla \di u \cdot \rho \emm \et \right)\,dx\\+(\gamex-1)\int_\Omega \left(\rho |\et|^2  \di u-(\di \et) u \cdot \rho \et -\nabla \di \et \cdot \rho \emm u + \nabla \di u \cdot \rho \emm \et \right)\,dx\\ +\int_\Omega\left(\rho u\cdot \nabla \left(-\frac 1 2 \emm^2\right)+ (\gamex -1)  \rho \emm^2 \di u+(\gamex-1)\emm u \cdot \rho \et +\gamex \rho \emm \nabla \emm \cdot u\right)\,dx =0.\end{multline*} The first integral $I_1$ is zero as explained in Section \ref{s:comp}.  Observe that \be \label{gwl} \nabla(\rho \emm)=\rho \et\ee because of  \eqref{kls99}. Hence, the fourth integral can be rewritten as $$I_4=(\gamex -1)\int_\Omega\di (\rho \emm^2 u)\,dx =0.$$  Integrating by parts and leveraging \eqref{gwl}, we get \begin{multline*} I_2=\int_\Omega\Big[-\rho \et\otimes u:\nabla \et+\rho \et\otimes \et:\nabla u +\nabla \et : \rho \emm (\nabla u)^\top + \nabla \et: \nabla (\rho \emm)\otimes u\\ - \nabla u:  \nabla (\rho\emm)\otimes  \et - \nabla u:  (\rho\emm) (\nabla \et)^\top \Big]\,dx=0.\end{multline*}  Finally, again due to \eqref{gwl}, \begin{multline*}  {I_3}=(\gamex-1)\int_\Omega\Big[\rho |\et|^2  \di u-(\di \et) u \cdot \rho \et+(\di \et) \rho \emm \di u + (\di \et) \nabla (\rho \emm)\cdot u\\ - (\di u) \rho \emm \di \et - (\di u) \nabla (\rho \emm)\cdot \et \Big]\,dx=0.\end{multline*} This completes the proof of  \eqref{e:consbark}. Thus, all required assumptions hold for this model, and the above ``abstract'' results apply. In particular, the existence result states:
	\begin{corollary} \label{excort} For any $(q_0,\g_0,\zp_0,\rho_0)\in  C(\Omega;\R^{2d+2})$, $\rho_0>0$, $\zp_0\equiv \rho_0^\gamex$, $\g_0 \equiv \nabla \zp_0$, there exists a maximizer $$(E,B)\in  \mathcal M([0,T]\times \Omega;\R^{2d+2})\times \mathcal M([0,T]\times \Omega;\R^{(2d+2)\times (2d+2)}_s)$$ of the corresponding dual problem \eqref{e:conc} for \eqref{kls4}--\eqref{kls99}, and there is no duality gap in the sense of \eqref{ndg}. \end{corollary}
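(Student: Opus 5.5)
Corollary \ref{excort} will follow from Theorem \ref{t:exweak} once we check that the augmented Euler--Korteweg system \eqref{kls4}--\eqref{kls99} fits the abstract setting of Section \ref{setti} and satisfies \eqref{e:plo}, and that the initial datum $v_0=(q_0,\g_0,\zp_0,\rho_0)$ lies in $C(\Omega;\ooo)\cap\ker\lc$. Conservativity \eqref{e:acons} has already been checked in \eqref{e:consbark}, but it is not even needed here, since Theorem \ref{t:exweak} does not use it.

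\textbf{Step 1: the structural hypotheses.} Write
$$\cf(v)=\frac{v\otimes v}{\rho}+\sum_{i=1}^d P(\rho)e_i\otimes e_i+[2U(\rho)-dP(\rho)-\rho]e_N\otimes e_N .$$
The first term is Löwner convex and positive semidefinite, being the standard perspective $v\otimes v/\rho$. The remaining rank-one terms are Löwner convex and positive semidefinite because $P$ and $2U-dP-\rho$ are convex and nonnegative; for the latter we use the normalization made after Assumption \ref{assp}. Hence $\cf$ maps into $\Mm$ and is Löwner convex on the convex set $\dom\cf=[\rho>0]\cup\{0\}$, whose interior is $\ooo=[\rho>0]$. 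On $\ooo$, $\cf$ is $C^1$.

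\textbf{Step 2: the entropy.} The entropy is
$$\ce(v)=\frac{|q|^2+|\g|^2+\zp^2}{2\rho}+U(\rho).$$
It is strictly convex on $\ooo$: the perspective part is convex with degeneracy only along rays $v\propto(\cdot,\rho)$, and $U''>0$ removes that degeneracy. It is also lower semicontinuous on $\R^n$, using the convention $+\infty$ outside $\dom\cf$ and that $|q|^2/\rho\to\infty$ unless $q\to0$. This gives Assumption \ref{asco}.

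\textbf{Step 3: the operators.} The operator $L$ is a closed, densely defined differential operator on $X_s^{N\times N}$. The operator $\lc v=\nabla\zp-\g$ is closed and densely defined, and it is surjective onto $X^d$: given $f\in X^d$, take $\zp=0$ and $\g=-f$. So its range is closed.

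\textbf{Step 4: condition \eqref{e:plo}.} Plug $M=I$ into $L$. The blocks are $\Xi=I$, $\Upsilon=I$, $\alpha=1$, $\kappa=1$, and $\Gamma=a=b=g=r=\beta=0$; all entries are constant in $x$, so every divergence and gradient vanishes. The only zero-order contributions are $(\gamex-1)\tr\Gamma$, which vanishes since $\Gamma$ is an off-diagonal block equal to $0$. This is exactly the mechanism described in Remark \ref{plusin}, so $L(I)=0$.

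\textbf{Step 5: the initial datum.} The given data are continuous with $\rho_0>0$, so they take values in $\ooo$. The hypothesis $\g_0=\nabla\zp_0$ gives $\lc v_0=0$.

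With all hypotheses verified, Theorem \ref{t:exweak} yields the maximizer $(E,B)$ and \eqref{ndg}. The main obstacle is minor: being careful that $\ce$ is genuinely lower semicontinuous at the boundary point $0$ of $\dom\cf$, and that the zero-order terms of $L$ do not spoil \eqref{e:plo}. Both are resolved by the explicit checks in Steps 2 and 4.
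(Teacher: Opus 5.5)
Your proposal is correct and follows essentially the same route as the paper. The paper likewise observes that \eqref{kls4}--\eqref{kls99} fits the abstract framework, that \eqref{e:plo} holds despite the zero-order term (which involves only the off-diagonal block $\Gamma$), and that $\lc$ has closed range, and then invokes Theorem~\ref{t:exweak} without needing conservativity; your explicit checks (surjectivity of $\lc$, the computation $L(I)=0$, and strict convexity and lower semicontinuity of $\ce$ under the normalization $2U-dP-\rho\ge 0$) spell out details the paper leaves implicit.
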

	We leave the derivation of the ``sharp'' formulation for this model as an exercise to the reader.
	
	\section{Inviscid Burgers' equation: revisiting Brenier's shock-free substitute}
	
	Let us finish our discussion  by shedding some more light on the duality features of the inviscid Burgers’ equation 
	\label{s:bur}
	\be \label{burg} \p_t v+ \frac 12 \p_x(v^2),\quad v(0)=v_0,\quad \int_{\mathbb T^1} v(x,t)\,dx=0,\ee  where $v:[0,T]\times \mathbb T^1\to \R.$
	
	This trivially fits into our abstract framework \eqref{e:aeuler}--\eqref{e:aeulero} by letting $$d=n=N=1,\ Z=0, \ \lc(v)=\int_\Omega v(x)\,dx,$$ 
	$$\ \cf(v)=v^2, \ \ooo=\dom \cf=\R,\ \ce(v)=\frac 1 2 v^2,$$
	$$\mom=v,$$
	$$L=-\frac 1 2 \p_x, \ L^*=\frac 1 2 \p_x.$$
	It is clear that the assumptions of Section \ref{setti} as well as condition \eqref{e:plo} hold. Hence, all our results are applicable. 
	
	In particular, by Theorem \ref{t:exweak}, for every $v_0\in C(\Omega)$ with zero mean there exists a variational dual solution $(E,B)\in \mathcal M([0,T]\times \Omega;\R^2)$. We stress that this existence result is not new for this model, as an analogous statement was already established in the context of Burgers’ equation (with $\wei \equiv 1$) by Brenier, see \cite[Proposition 4.4]{CMP18}.  To emphasize the link with one-dimensional mean-field games, he works with the variables $(q,\rho)$ that correspond  in our notation to $(-E,\mut+2B)$.  Moreover, it follows from \cite[Appendix B]{V25} that the corresponding dual problem (with $\wei\equiv 1$, cf. \cite[problem (B.5-B.6)]{V25}) represents a \emph{ballistic} variant of Monge–Kantorovich optimal transport in one dimension.
	
	Assume now that $v_0$ is smooth and has zero mean. In this case, Brenier shows \cite[Theorem 4.2]{CMP18} that the Radon measure $q$ is absolutely continuous with respect to $\rho$, and the Radon-Nikodym derivative $\frac {dq}{d\rho}$ can be characterized as follows. Let $v$ be the unique entropy solution to \eqref{burg}, possibly with shocks. Then he explicitly constructs an initial datum $v_0^T(x)$ such that the corresponding entropy solution $v^T(t,x)$ is bounded and Borel measurable on $[0,T]\times \Omega$, continuous on $[0,T)\times \Omega$ and satisfies \be v^T(T,x)=v(T,x),\ \mathrm{for}\ \mathrm{a.e.}\ x\in \Omega.\ee He calls $v^T$ a \emph{shock-free substitute} of $v$, and shows that \be \label{qvtr} d q=v^T d\rho.\ee
	
	Let us recall the way $v^T_0$ is constructed. Since the spatial mean of $v_0$ is zero, there exists a smooth $1$-periodic  function $\varphi:\R\to \R$ such that $\varphi'=v_0$. Let $\varphi^T:\R\to \R$ be such that $\frac {x^2}{2}+ T\varphi^T(x)$ is the convex envelope of $\frac {x^2}{2}+ T\varphi(x)$. It is easy to see that $\varphi^T$ is $1$-periodic and the contact set $\omega:=[\varphi^T=\varphi]$ is closed and  $1$-periodic. Then $$v^T_0:=(\varphi^T)'.$$ Note that $v^T_0$ is Lipschitz by \cite[Theorem 4.2]{GR90}. Actually, $v^T_0$ coincides with $v_0$ in $\omega$, and {$x+Tv^T_0(x)$} is constant on each connected component of $\Omega\backslash\omega$. Consequently, $v^T$ is locally Lipschitz on $[0,T)\times \Omega$.
	
It can also be deduced from Brenier's argument, cf.  \cite[p. 601]{CMP18}, that the support of the measure $\rho$ has gaps and the discrepancy between $v$ and $v^T$ only occurs inside those gaps. Hence, we also have  \be \label{consist} d q=v d\rho,\ee
	which shows that no anomaly occurs: \textbf{Brenier’s duality formulation is  compatible with all entropy solutions, including those containing shocks}. Moreover, we claim that the \textbf{shock-free substitute can be fully recovered from the variational dual solution 
		$(q,\rho)$, even in regions where $\rho$ vanishes}.

	More precisely, $v^T$ can be obtained from $q=-E$ via formula \eqref{e:ta1} applied in the context of Burgers' equation (were are assuming $\wei\equiv 1$ as Brenier does, though it does not seem to be necessary). Accordingly, our claim is \begin{proposition} Brenier's shock free substitute $v^T$ and the optimal flux $q$ from \cite[Theorem 4.2]{CMP18} satisfy \be \label{e:ta2} \la\psi, v^T\mut \ra= \la\int_0^t\frac {\psi(s,\cdot)}{T-s}\,ds, q\ra,\ee for any smooth test function $\psi(t,x)$ that vanishes in a neighborhood of $\{T\}\times \Omega$. \end{proposition}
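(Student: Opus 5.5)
The plan is to show that the optimal pair $(q,\rho)$ of \cite[Theorem 4.2]{CMP18} coincides with the pair $(-E_+,\mut+2B_+)$ produced, as in Theorem \ref{t:smooth}, from the shock-free substitute $v^T$ with weight $\wei\equiv1$, $\Wei(t)=T-t$. Here $Z=0$ in effect (no multiplier is needed, since the mean-zero constraint is preserved). Once this is in place, \eqref{e:ta2} is just the inversion formula \eqref{e:ta1} with $q=-E_+$. Concretely, I aim to establish the identity of distributions
\begin{equation*}
q=-\p_t\big((T-t)v^T\mut\big)\quad\text{on }[0,T]\times\Omega,
\end{equation*}
tested against smooth functions $\phi$ with $\phi(0)=0$ but \emph{not} necessarily vanishing at $t=T$.

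First step: identify $\rho$. I would use the characteristics of the locally Lipschitz solution $v^T$ on $[0,T)$. Along $x=y+tv^T_0(y)$ one has $\p_xv^T=\frac{(v^T_0)'(y)}{1+t(v^T_0)'(y)}$, so
\begin{equation*}
1+(T-t)\p_xv^T=\frac{1+T(v^T_0)'(y)}{1+t(v^T_0)'(y)}\ge0 .
\end{equation*}
This expression vanishes exactly on the images of the components of $\Omega\setminus\omega$, where $x+Tv^T_0$ is constant, i.e.\ exactly on the gaps of $\operatorname{supp}\rho$. I would check against Brenier's explicit construction \cite[p.~601]{CMP18} that
\begin{equation*}
\rho=\big(1+(T-t)\p_xv^T\big)\mut=\mut+2(T-t)L^*(v^T\mut),
\end{equation*}
which is the analogue of $B_+$ in Theorem \ref{t:smooth}. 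If Brenier's statement does not give this formula directly, I would instead verify that $(-\p_t((T-t)v^T),\,(T-t)L^*v^T)$ is admissible and optimal, as in the proof of Theorem \ref{t:smooth}, and use uniqueness of $\rho$ and $q$ on $\operatorname{supp}\rho$ from Brenier's theorem.

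Second step: combine \eqref{qvtr}, $dq=v^T\,d\rho$, with the Burgers equation $\p_tv^T=-\frac12\p_x((v^T)^2)$, which holds a.e.\ since $v^T$ is Lipschitz on $[0,T')\times\Omega$ for every $T'<T$. This gives
\begin{equation*}
q=v^T+(T-t)v^T\p_xv^T=v^T-(T-t)\p_tv^T=-\p_t\big((T-t)v^T\big)
\end{equation*}
on $[0,T)\times\Omega$.

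Third step: the computation at the end of the proof of Theorem \ref{t:smooth}. Set $\Phi(t,x)=\int_0^t\frac{\psi(s,x)}{T-s}\,ds$, which is smooth with $\Phi(0)=0$. Then
\begin{equation*}
\la\psi,v^T\mut\ra=\la\p_t\Phi,(T-t)v^T\mut\ra=-\la\Phi,\p_t((T-t)v^T\mut)\ra=\la\Phi,q\ra .
\end{equation*}

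The main obstacle is the behaviour near $t=T$. There $v^T$ is merely bounded and Borel, and $\Phi$ does not vanish; it is only independent of $t$ in a neighbourhood of $T$, because $\psi$ vanishes there. I would handle this by performing the integration by parts on $[0,T']$ and letting $T'\uparrow T$. The boundary term is $(T-T')\int_\Omega\Phi(T')v^T(T')\,dx$, which tends to $0$ because $v^T$ is bounded. One must also check that $q$ (a finite measure) puts no mass on $\{T\}\times\Omega$ beyond what the limit accounts for, i.e.\ that $\la\Phi,q\ra=\lim_{T'\uparrow T}\la\Phi\mathbf 1_{[0,T']},q\ra$. This follows from $|q|\le\|v^T\|_\infty\,\rho$ and the fact that $\rho=(1+(T-t)\p_xv^T)\mut$ has no singular part on $\{t=T\}$, whose total mass on each slice is $\int_\Omega\rho(t)\,dx=1$ by periodicity.
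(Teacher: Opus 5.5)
Your overall architecture matches the paper's. You identify $\rho$ with $\rho^T:=\mut(1+(T-t)\p_x v^T)$, deduce $q=\rho^T v^T=-\p_t((T-t)v^T)$ from \eqref{qvtr} and the Burgers equation, and integrate by parts against $\Phi=\int_0^t\psi(s,\cdot)/(T-s)\,ds$. Your second and third steps, and your handling of $t\to T$, are fine.

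\textbf{The gap is in the first step.} That step carries essentially all the work, and you do not prove it.

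\textbf{The primary route is not an argument.} ``Checking against Brenier's explicit construction'' does not establish anything. What this paper takes from \cite{CMP18} is $dq=v^T\,d\rho$, the equation $\p_t\rho=-\p_x q$, and information on $\rho$ at $t=0$ only (support in $\omega$, density $1+Tv_0'$ there). You do not say where a formula for $\rho(t)$, $t>0$, would come from. Your remark that $1+(T-t)\p_x v^T$ vanishes ``exactly on the gaps of $\operatorname{supp}\rho$'' already presupposes the conclusion.

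\textbf{The fallback fails as stated, for two reasons.}
\begin{enumerate}
\item The optimality argument of Theorem \ref{t:smooth} relies on $v$ being a continuous weak solution with datum $v_0$ and conserved entropy. That is what yields both \eqref{e:w2+} and the upper bound $\mathcal I(v_0,T)\le\Wei(0)K_0$. Here $v^T$ starts from $v_0^T\neq v_0$ and is discontinuous at $t=T$. Indeed, the dual value of your candidate works out to $TK_0-\frac T2\|v_0-v_0^T\|_{L^2}^2$, not $TK_0$. Proving that this value is optimal would need a different upper bound, going through the convex envelope and the entropy solution.
\item Even granting optimality, Brenier's theorem pins down the density $dq/d\rho$, not $\rho$ itself. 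Uniqueness of the maximizer is not automatic for this concave problem, since the integrand $q^2/(2\rho)$ is $1$-homogeneous and hence not strictly convex. You neither prove uniqueness nor locate a precise statement in \cite{CMP18} that provides it.
\end{enumerate}

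\textbf{The missing idea is uniqueness for the linear continuity equation, and you already have every ingredient.}
\begin{enumerate}
\item From $\p_t\rho=-\p_x q$ and \eqref{qvtr}, $\rho$ solves $\p_t\mu+\p_x(\mu v^T)=0$ on $(0,T)\times\Omega$.
\item Differentiating in $x$ your step-two identity $\p_t((T-t)v^T)=-\rho^T v^T$ shows that $\rho^T$ solves the same equation.
\item Both have the initial trace $\mut(1+T(v_0^T)')$. For $\rho(0)$, this is exactly the information Brenier's argument supplies. For $\rho^T(0)$, it follows because $x+Tv_0^T$ is constant on each component of $\Omega\setminus\omega$, while $(v_0-v_0^T)'=0$ a.e.\ on $\omega$.
\item Since $v^T$ is Lipschitz on $[0,T']\times\Omega$ for every $T'<T$, the Cauchy problem for weakly continuous nonnegative measure solutions has a unique solution. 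Hence $\rho=\rho^T$ on $(0,T)\times\Omega$.
\end{enumerate}
Your characteristic formula is precisely the push-forward representation of this unique solution. With this replacement for your first step, the rest of your proof goes through.
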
 Note that \eqref{e:ta2}  unambiguously determines $v^T$ in $[0,T)\times \Omega$. 
	
	\begin{proof} We begin with claiming that \be \label{crux} \rho=\rho^T:=\mut(1+(T-t)\p_x(v^T))\ee in the sense of distributions on $(0,T)\times \Omega$. 
		
		To see this, observe first that, by construction, both $\rho$ and $\rho^T$ are weakly continuous functions of $t$ valued in the space of distributions on $\Omega$.  Moreover, both of them satisfy the initial condition \be \label{i:cond} \rho(0)=\rho^T(0)=\mut(1+T\p_x(v_0^T)).\ee Indeed, it follows from Brenier's argument that the support of $\rho(0)$ is contained in $\omega$, and $\rho(0)$ coincides with $\mut(1+Tv_0')$ in $\omega$. The same claims are true for $\rho^T(0)=\mut(1+T(v_0^T)')$. To see this, note first that {$x+Tv^T_0(x)$}  is constant on each connected component of $\Omega\backslash\omega$, thus the support of $\rho^T(0)$ is contained in $\omega$. Since $v^T_0=v_0$ in $\omega$, it is rather obvious that $(v_0-v_0^T)'(x)=0$ for every non-isolated point of $\omega$ where $v_0^T$ is differentiable (hence, a.e. in $\omega$). 
		
		By \cite[display after (4.60)]{CMP18} we have \be \label{e:cont} \p_t \rho=-\p_x q \ee in the sense of distributions.  Thus, due to \eqref{qvtr}, \be \label{e:cont1} \p_t \rho=-\p_x (\rho v^T)\ee in the sense of distributions on $(0,T)\times \Omega$. At the same time, we note that \be \label{e:cont2} \p_t \rho^T=-\p_x (\rho^T v^T)\ee in the sense of distributions.  Indeed, since $v^T$ is an entropy solution, we have  \begin{multline} \label{vdir} \p_t((T-t)v^T)=-(v^T+(t-T)\p_t(v^T))\\=-\left(v^T+\frac 1 2 (T-t)\p_x((v^T)^2)\right)=-\left(v^T+(T-t)v^T\p_x(v^T)\right)=-\rho^T v^T\end{multline} in the sense of distributions. Hence, $$ \p_t \rho^T=\p_t((T-t)\p_x(v^T))=\p_x\p_t((T-t)v^T)=-\p_x (\rho^T v^T)$$ in the sense of distributions.
		
		The locally Lipschitz regularity of $v^T$ guarantees uniqueness of the solutions to the Cauchy problem \eqref{e:cont1}, \eqref{i:cond}; thus $\rho=\rho^T$ as claimed.  
		
		Putting everything together, we deduce that \begin{multline*} \la\int_0^t\frac {\psi(s,\cdot)}{T-s}\,ds, q\ra = \la\int_0^t\frac {\psi(s,\cdot)}{T-s}\,ds, \rho v^T\ra = \la\int_0^t\frac {\psi(s,\cdot)}{T-s}\,ds, \rho^T v^T\ra \\ = -\la\int_0^t\frac {\psi(s,\cdot)}{T-s}\,ds, \p_t((T-t)v^T)\ra=\la \frac{\psi}{T-t}, (T-t)v^T\mut\ra =   \la \psi, v^T \mut\ra\end{multline*} for any smooth test function $\psi(t,x)$ that vanishes in a neighborhood of $\{T\}\times \Omega$.
		\end{proof}
	
	\begin{remark}  \label{nodualitygap} Let us verify that, in the context of this section, there is no gap between $\mathcal I(v_0, T)$ and $\tilde{\mathcal J}(v_0, T)$. By \eqref{ndg}, $\tilde{\mathcal I}(v_0, T)=\tilde{\mathcal J}(v_0, T)$, and it follows from \cite[Proposition 4.4]{CMP18} that $\tilde{\tilde{\mathcal I}}(v_0, T)=\tilde{\mathcal J}(v_0, T)$, where $\tilde{\tilde{\mathcal I}}$ is defined as in \eqref{e:sadd1sub}, but restricted to smooth pairs $(v,M)$. Thus it is enough to show that $\mathcal I(v_0, T)\le \tilde{\tilde{\mathcal I}}(v_0, T)$. This follows from the observation that the time integral of the total entropy $K(t)$ of the entropy solution $v$ cannot be larger than the time integral of the total entropy $\tilde K(t)$ of any smooth subsolution $(u,M)$ to \eqref{burg}. Indeed, for every such subsolution there exists a smooth function $\theta:[0,T]\times \mathbb T^1\to \R$ such that $$\p_x\theta=u,\ \p_t\theta+\frac 1 2 M=0,\ \theta(0,\cdot)=\varphi,\ M\ge u^2.$$  By comparison principle, $\theta \leq \psi$, where $\psi$ is the viscosity solution to the quadratic Hamilton-Jacobi equation emanating from $\varphi$. Consequently, \begin{multline*}\int_0^T K(t)\,dt=\frac 1 2\int_0^T (v,v)\,dt=\frac 1 2\int_0^T (\p_x \psi,\p_x \psi)\,dt \int_{\mathbb T^1}=\int_{\mathbb T^1}\left[ \varphi(x)-\psi(T,x)\right]\,dx\\ \le \int_{\mathbb T^1}\left[ \varphi(x)-\theta(T,x)\right]\,dx=\int_0^T \tilde K(t)\,dt.\end{multline*} \end{remark}

	\label{Burg}
	
	\appendix

\section{Formal derivation of \eqref{e:aeuler2} and \eqref{e:entc}.} \label{apb}

Assume \eqref{e:acons}. Let $a: \Omega \to \R^n$ be a smooth test function satisfying $\lc a=0$ and $v\in C([0,T]\times \Omega; \ooo)$ be a smooth solution to \eqref{e:aeuler}--\eqref{e:aeulero}. Then there exists a small uniform $\varepsilon>0$ so that $v(t,x)+sa(x)\in\ooo$ for $|s|<\varepsilon$. Hence, due to \eqref{e:acons}, \be  (\cf(v+sa), L^*(\nabla \ce(v+sa)))=0\ee for every $t\in[0,T]$. Taking the derivative with respect to $s$ at $s=0$ yields \be \label{o:aeulepp} \sum_{l=1}^n \left[(\p_l\cf(v)a_l, L^*(\mom))+(L(\cf(v)),\p_{v_l}(\nabla \ce (v))a_l)\right]=0.\ee
Using \eqref{e:aeuler}, we recast \eqref{o:aeulepp} in the form 
\be \label{o:aeulepp1} \sum_{l=1}^n (\p_l\cf(v):L^*(\mom),a_l)+\sum_{l,m=1}^n(\p_t v_m,\p_{lm}\ce (v)a_l)=0.\ee
By the chain rule, $\p_t (\mom)_l=\p_t (\p_l \ce(v))=\sum_{m=1}^n \p_{lm}\ce (v)\p_t v_m$. Thus \eqref{o:aeulepp1} becomes \be \label{o:aeulepp2} \sum_{l=1}^n (\p_l\cf(v):L^*(\mom)+\p_t (\mom)_l,a_l)=0.\ee Let $\zeta$ be defined by $$\zeta_l:=\p_t (\mom\mut)_l+L^* (\mom\mut):\partial_l \cf(\nabla \ce^*(\mom))=\p_t (\mom\mut)_l+L^* (\mom\mut):\partial_l \cf(v).$$ By \eqref{o:aeulepp2}, $\zeta(t)$ lies in the annihilator of $\ker \lc$. By the closed range theorem, $\lc^*$ has closed range. Consequently, $\zeta(t) \in \operatorname{ran} \lc^*$, which yields \eqref{e:aeuler2}.

In order to formally establish \eqref{e:entc}, it suffices to observe that
$$\frac d {dt} K(t)=\int_\Omega \p_t\ce (v(t,x))\,d x=(\nabla \ce(v), \p_t v)=(\mom,L(\cf(v)))\\=(L^*(\mom),\cf(v))=0.$$

	\section{Some open problems} \label{apa}

\begin{openproblem} \label{op1} It is unclear how to relax the assumption $v_0\in  C(\Omega;\ooo)$ in the setting of Theorem \ref{t:exweak}, or at least how to allow $v_0$ to take some values in $\dom \cf \backslash \ooo$. Consequently, it remains an open problem to establish the existence of variational dual solutions for the compressible fluid models considered in Section \ref{Sec3} without assuming that the initial density $\rho_0$ is continuous and strictly positive.   \end{openproblem}

\begin{remark} For models that fit into our framework with $\dom \cf = \mathbb{R}^n$, satisfy the so-called \emph{trace condition}, and whose entropy function $\ce$ is either quadratic or generates an anisotropic Orlicz space, it is typically possible to accommodate discontinuous initial data (cf. \cite{CMP18,V22,V25,AV25}). Note that the trace condition fails for the inviscid Burgers equation, see \cite[Remark 5.3]{V25}. \end{remark}

\begin{openproblem} \label{r:time} Assume \eqref{e:acons},  \eqref{e:plo}, and let $v_0\in  C(\Omega;\ooo)\cap\ker \lc$.  Let $(E,B)$ be any maximizer of \eqref{e:conc} (they exist by Theorem \ref{t:exweak}). The fact that formula \eqref{e:ta1} recovers strong solutions to \eqref{e:aeuler}--\eqref{e:aeulero} at all times, as well as discontinuous entropy solutions of the Burgers equation at the terminal time $T$ and on the support of the measure $\rho$ from Section \ref{s:bur}, suggests defining a ``generalized'' solution $\mom$ (a rough analogue of Brenier's shock-free substitute, see also \cite[Remark 4.4]{V22}, \cite[Remark 5.5]{V25}) to \eqref{e:aeuler}--\eqref{e:aeulero}  through the duality relation
	\be \label{e:ta3} \la\psi, \mom\mut \ra= -\la\int_0^t\frac {\psi(s,\cdot)}{\Wei(s)}\,ds, E\ra,\ee for any smooth test function $\psi(t,x)$ that vanishes in a neighborhood of $\{T\}\times \Omega$ and satisfies $\lc \psi=0$.
	It remains unclear under which additional assumptions on $L, \cf$ and $\lc$ one can recover 
	$v$ from 
	$\mom$ in any meaningful sense compatible with the constraints, and what the precise regularity classes of such objects 
	$v$ and $\mom$ should be.  \end{openproblem}
	
	\begin{remark} For $\lc \equiv 0$ and $\dom \cf =\R^n$ (i.e., when there are no constraints) and under the trace condition, this issue is settled in the quadratic case $\cf(v)=v \otimes v$, see \cite[Remark 4.4]{V22}, and in the ``Orlicz'' case, see \cite[Remark 5.5]{V25}. 
		\end{remark}
	 \begin{openproblem} \label{op33} The following compatibility question remains an open problem for both quadratic and non-quadratic nonlinearities, with the Burgers case having been settled above. Suppose that $v_0(x)$ is such that there exists a unique ``physically relevant'' solution $\tilde v(t,x)$ to \eqref{e:aeuler}--\eqref{e:aeulero}, in any appropriate sense, on $[0,T]\times \Omega$, and assume that \eqref{e:pd++} is violated. A natural question is then: under which structural hypotheses does the corresponding $\mom$, obtained from a variational dual solution $(E,B)$ through \eqref{e:ta3}, coincide with $\tilde{v}^{\#}$ at the terminal time $T$? Should this property hold for a certain model, the physical solution $\tilde v$ could be retrieved at all times via the “mountain climbing” strategy proposed in \cite[p.~597]{CMP18}.\end{openproblem}
	 \begin{openproblem} A related open consistency problem, likewise settled only in the Burgers case (cf. \eqref{consist}), is the following. Let $v_0$ and $\tilde v$ be as in Open Problem \ref{op33}, and suppose that \eqref{e:pd++} fails. For which structural hypotheses do the associated variational dual solutions $(E,B)$ satisfy \be \label{e:vret3} \frac 1 2 (\wei I\mut+2B):\partial_l\cf(\tilde v)+(E)_l=0,\quad l=1,\dots,n,\ee cf. \eqref{e:vret1}, in the sense of Radon measures? In particular, in the quadratic case $\cf(v)=v \otimes v$ the conjectured relation \eqref{e:vret3} has something of the flavour of  matrix-valued optimal transport (cf. \cite{BV18,Lizou25,V22}), namely \be \label{e:vret4} (\wei I\mut+2B).\tilde v=-E\ee as vector-valued Radon measures, with $(\wei I\mut+2B)$ playing the role of a ``density'', $\tilde v$ that of a ``velocity'', and $(-E)$ that of the corresponding ``mass flux''.
	 \end{openproblem}

	\subsection*{Acknowledgement} 
	The work was partially supported by CMUC\footnote{https://doi.org/10.54499/UID/00324/2025} under FCT, grants UID/00324/2025 and UID/PRR/00324/2025, and by KAUST Research Funding (KRF), award ORFS-2024-CRG12-6430.3. {The author is very grateful to the anonymous reviewer for pointing out that the change of variables \eqref{sharp} was pioneered in Godunov's work in the 1960s}, and to L.-P. Chaintron for helpful discussions.

	\end{document}